 \newtheorem{thm}{Theorem}[section]
 \newtheorem{lem}[thm]{Lemma}
 \newtheorem{prop}[thm]{Proposition}
 \theoremstyle{definition}
 \newtheorem{defn}[thm]{Definition}
 \theoremstyle{remark}
 \newtheorem{rem}[thm]{Remark}
 \numberwithin{equation}{section}
\newcommand{\trho}{\tilde\rho}
\newcommand{\dis}{\displaystyle}
\newcommand{\divv}{\text{\rm div}}
\newcommand{\R}{\mathbb R}
\newcommand{\Int}{\int_0^t\int_{\R^3}}
\newcommand{\intox}{\int_{\R^3}}
\newcommand{\es}{\varepsilon}
\newcommand{\dt}{\partial_t}
\newcommand{\intoxt}{\int_0^T\int_{\R^3}}
\newcommand{\supp}{\text{\rm supp }}
\numberwithin{equation}{section}
\numberwithin{theorem}{section}
\numberwithin{figure}{section}
\begin{document}

%
%
%
%
%
%
%
%
%








\title[Navier-Stokes Equations with Large External Force]
 {Some Serrin type blow-up criteria for the three-dimensional viscous compressible flows with large external potential force}
 
\author{Anthony Suen} 

\address{Department of Mathematics and Information Technology\\
The Education University of Hong Kong}

\email{acksuen@eduhk.hk}

\date{\today}

\keywords{Navier-Stokes equations; compressible flow; potential force; blow-up criteria}

\subjclass[2000]{35Q30} 

\begin{abstract}
We provide a Serrin type blow-up criterion for the 3-D viscous compressible flows with large external potential force. For the Cauchy problem of the 3-D compressible Navier-Stokes system with potential force term, it can be proved that the strong solution exists globally if the velocity satisfies the Serrin’s condition and the sup-norm of the density is bounded. Furthermore, in the case of isothermal flows with no vacuum, the Serrin’s condition on the velocity can be removed from the claimed criterion.
\end{abstract}

\maketitle
\section{Introduction}\label{intro section}

In this present work, we are interested in the 3-D compressible Navier-Stokes equations with an external potential force in the whole space $\R^3$ ($j=1,2,3$):
\begin{align}\label{NS} 
\left\{ \begin{array}{l}
\rho_t + \divv (\rho u) =0, \\
(\rho u^j)_t + \divv (\rho u^j u) + (P)_{x_j} = \mu\,\Delta u^j +
\lambda \, (\divv \,u)_{x_j}  + \rho f^{j}.
\end{array}\right.
\end{align}
Here $x\in\R^3$ is the spatial coordinate and $t\in[0,\infty)$ stands for the time. The unknown functions $\rho=\rho(x,t)$ and $u=(u^1,u^2,u^3)(x,t)$ represent the density and velocity vector in a compressible fluid. The function $P=P(\rho)$ denotes the pressure, $f=(f^1(x),f^2(x),f^3(x))$ is a prescribed external force and $\mu$, $\lambda$ are positive viscosity constants. The system \eqref{NS} is equipped with initial condition
\begin{equation}
\label{IC}
(\rho(\cdot,0)-\trho,u(\cdot,0)) = (\rho_0-\trho,u_0),
\end{equation}
where the non-constant time-independent function $\tilde\rho=\tilde\rho(x)$ (known as the {\it steady state solution} to \eqref{NS}) can be obtained formally by taking $u\equiv0$ in \eqref{NS}:
\begin{align}\label{steady state}
\nabla P(\tilde\rho(x)) =\tilde\rho(x)f(x).
\end{align}

The Navier-Stokes system \eqref{NS} expresses conservation of momentum and conservation of mass for Newtonian fluids, which has been studied by various teams of researchers. The local-in-time existence of classical solution to the full Navier-Stokes equations was proved by Nash \cite{nash} and Tani \cite{tani}. Later, Matsumura and Nishida \cite{mn1} obtained the global-in-time existence of $H^3$-solutions when the initial data was taken to be small (with respect to $H^3$ norm), the results were then generalised by Danchin \cite{danchin} who showed the global existence of solutions in critical spaces. In the case of large initial data, Lions \cite{lions} obtained the existence of global-in-time finite energy weak solutions, yet the problem of uniqueness for those weak solutions remains completely open. In between the two types of solutions as mentioned above, a type of ``intermediate weak'' solutions were first suggested by Hoff in \cite{hoff95, hoff05, hoff06} and later generalised by Matsumura and Yamagata in \cite{MY01}, Suen in \cite{suen13b, suen14, suen16, suen2021existence} and other systems which include compressible magnetohydrodynamics (MHD) \cite{suenhoff12, suen12, suen20b}, compressible Navier-Stokes-Poisson system \cite{suen20a} and chemotaxis systems \cite{LS16}. Solutions as obtained in this intermediate class are less regular than those small-smooth type solutions obtained by Matsumura and Nishida \cite{mn1} and Danchin \cite{danchin} in such a way that the density and velocity gradient may be discontinuous across some hypersurfaces in $\R^3$. On the other hand, those intermediate weak solutions would have more regularity than the large-weak type solutions developed by Lions \cite{lions} so that the uniqueness and continuous dependence of solutions may be obtained; see \cite{hoff06} and other compressible system \cite{suen20b}. 

Nevertheless, the global existence of smooth solution to the Navier-Stokes system \eqref{NS} with arbitrary smooth data is still unknown. From the seminal work given by Xin \cite{xin}, it was proved that smooth solution to \eqref{NS} will blow up in finite time in the whole space when the initial density has compact support. Motivated by the well-known Serrin's criterion on the Leray-Hopf weak solutions to the 3-D incompressible Navier-Stokes equations, Huang, Li and Xin \cite{HLX11} later proved that the strong solution exists globally if the velocity satisfies the Serrin's condition and either the sup-norm of the density or the time integral of the $L^\infty$-norm of the divergence of the velocity is bounded. Under an extra assumption on $\lambda$ and $\mu$, Sun, Wang and Zhang \cite{swz11} obtained a Beale-Kato-Majda blow-up criterion in terms of the upper bound of the density, which is analogous to the Beal-Kato-Majda criterion \cite{BKM84} for the ideal incompressible flows. The results from \cite{swz11} were later generalised to other compressible systems \cite{suen13a, suen15, suen20b}.

In this present work, we extend the results from \cite{HLX11} and \cite{swz11} to the case of compressible Navier-Stokes equations with large potential force. The main novelties of this current work can be summarised as follows:
\begin{itemize}
\item We successfully extend the results from \cite{HLX11} and obtain a Serrin type blow-up criterion for \eqref{NS} in which initial vacuum state is allowed;
\item For the isothermal case, under the assumption that initial density is away from zero, we obtain a blow-up criterion in terms of density {\it only}. Such result is also consistent with the case studied in \cite{suen13a} when the magnetic field is removed. 
\item We introduce some new methods in controlling extra terms originated from the external force $f$ which is absent in \cite{suen13a} and \cite{suen20b}.
\end{itemize} 

We give a brief description on the analysis applied in this work, and the main idea of the following discussion comes from Hoff \cite{hoff95, hoff05, hoff06}. Due to the presence of the external force $f$, one cannot simply apply the same method given in \cite{suen13a} and \cite{suen20b} for obtaining the required blow-up criteria for the solutions. To understand the issue, we consider a decomposition on the velocity $u$ given by 
\begin{equation*}
u=u_p+u_s,
\end{equation*}
for which $u_p$ and $u_s$ satisfy
\begin{align}\label{def of u_p and u_s}  
\left\{ \begin{array}{l}
\mu\Delta(u_p)+\lambda\nabla\divv(u_p)=\nabla(P-P(\trho)),\\
\rho(u_s)_t-\mu\Delta u_s-\lambda\nabla\divv(u_s)=-\rho u\cdot\nabla u-\rho(u_p)_t+\trho^{-1}(\rho-\trho)\nabla P(\trho).
\end{array}\right.
\end{align}
The decomposition of $u$ is important for obtaining some better estimates on the velocity $u$, which allows us to control $u$ in terms of $u_s$ and $u_p$ separately. Since we are addressing solutions around the steady state $(\trho,0)$, it is natural to consider the difference $P-P(\trho)$ as appeared in \eqref{def of u_p and u_s}$_1$. Yet the term $P(\trho)$ will create extra terms in the following sense:
\begin{itemize}
\item On the one hand, since $P(\trho)$ is not necessary a constant, there is an extra term $\nabla P(\trho)$ arising from $\nabla(P-P(\trho))$;
\item On the other hand, using the identity \eqref{steady state}, we can express $f$ in terms of $\trho$ and $P$, so that the term $\rho f$ from \eqref{NS}$_2$ can be combined with $\nabla P(\trho)$ to give $\trho^{-1}(\rho-\trho)\nabla P(\trho)$ in \eqref{def of u_p and u_s}$_2$. 
\end{itemize}
Compare with the previous work \cite{suen13a} and \cite{suen20b}, the term $\trho^{-1}(\rho-\trho)\nabla P(\trho)$ is distinctive for the present system \eqref{NS}, and we have to develop new method for dealing with it. By examining the regularity, one can see that $\trho^{-1}(\rho-\trho)\nabla P(\trho)$ is more regular than $\nabla(P-P(\trho))$, hence it can be used for obtaining $H^1$ estimates on $u_s$ provided that $\|\trho^{-1}(\rho-\trho)\nabla P(\trho)\|_{L^2}$ is under control. Thanks to the $L^2$-energy balance law given by \eqref{L^2 estimate blow up}, we can control $\|\trho^{-1}(\rho-\trho)\nabla P(\trho)\|_{L^2}$ if $\rho$ is bounded. This is a crucial step for obtaining estimates for $u$ in some higher regularity classes, and the details will be carried out in section~\ref{proof of main 2 section}.

Another key of the proof is to extract some ``hidden regularity'' from the velocity $u$ and density $\rho$, which is crucial for decoupling $u$ and $\rho$. In order to achieve our goal, we introduce an important canonical variable $F$ associated with the system \eqref{NS}, which is known as the {\it effective viscous flux}. To see how it works, by the Helmholtz decomposition of the mechanical forces, we can rewrite the momentum equation \eqref{NS}$_2$ as follows (summation over $k$ is understood):
\begin{equation}\label{derivation for F}
\rho\dot u^j = (\trho F)_{x_j}+\mu\omega^{j,k}_{x_k}+\rho f^{j}-P(\trho)_{x_j},
\end{equation}
where $\dot u^j=u^j_t+u\cdot u^j$ is the material derivative on $u^j$, $\omega^{j,k}=u^j_{x_k}-u^k_{x_j}$ is the vorticity and the effective viscous flux $F$ is defined by
\begin{equation*}
\tilde\rho F=(\mu+\lambda)\divv\,u-(P(\rho)-P(\tilde\rho)).
\end{equation*}
By differentiating \eqref{derivation for F} with respect to $x_{j}$ and using the anti-symmetry from $\omega$, we obtain the following Poisson equation for $F$
\begin{equation}\label{poisson in F}
\Delta (\trho F) = \divv(\rho\dot{u}-\rho f+\nabla P(\trho)).
\end{equation}
The Poisson equation \eqref{poisson in F} can be viewed as the analog for compressible Navier-Stokes system of the well-known elliptic equation for pressure in incompressible flow. For sufficiently regular steady state $\trho$, by exploiting the Rankine-Hugoniot condition (see \cite{suenhoff12} for example), one can deduce that the effective viscous flux $F$ is relatively more regular than $\divv(u)$ or $P(\rho)$, which turns out to be crucial for the overall analysis in the following ways:

\noindent{(i)} The equation \eqref{derivation for F} allows us to decompose the acceleration density $\rho \dot u$ as the sum of the gradient of the scalar $F$ and the divergence-free vector field $\omega^{\cdot,k}_{x_k}$. The skew-symmetry of $\omega$ insures that these two vector fields are orthogonal in $L^2$, so that $L^2$ bounds for the terms on the left side of \eqref{derivation for F} immediately give $L^2$ bounds for the gradients of both $F$ and $\omega$. These in turn will be used for controlling $\nabla u$ in $L^4$ when the estimates of $u(\cdot,t)$ in $H^2$ are unknown, which are crucial for estimating different functionals in $u$ and $\rho$; also refer to Lemma~\ref{higher estimate on nabla u lem} and Remark~\ref{rem:L4 bound on u by F}. The details will be carried out in section~\ref{proof of main 1 section}.

\noindent{(ii)} As we have seen before, we aim at applying a decomposition of $u$ given by $u=u_p+u_s$ with $u_p$ satisfying \eqref{def of u_p and u_s}$_1$. To estimate the term $\dt u_p$, if we apply time-derivative on the above identity, then there will be the term $\nabla(\dt P(\rho))$ appearing in the analysis. In view of the strongly elliptic system \eqref{def of u_p and u_s}$_1$, we can obtain estimates on $\|\dt u_p\|_{L^2}$ in terms of the lower order term $\|P(\rho)u\|_{L^2}$ if we have
\begin{align*}
\nabla(\dt P(\rho))=\nabla\divv(-P(\rho)u),
\end{align*}
which is valid when the system is {\it isothermal}, i.e. for the case when $\gamma=1$ in \eqref{condition on P}; also refer to Lemma~\ref{estimate on u_s lemma} and the estimate \eqref{estimate on dt u_p}.
 
We now give a precise formulation of our results. For $r\in[1,\infty]$ and $k\in[1,\infty)$, we let $L^r:=L^r(\R^3)$, $W^{k,r}:=W^{k,r}(\R^3)$ and $H^{k}:=H^{k}(\R^3)$ be the standard Sobolev spaces, and we define the following function spaces for later use (also refer to \cite{HLX11, WEN2013534, swz11} for similar definitions):
\begin{align*} 
\left\{ \begin{array}{l}
D^{k,r}:=\{u\in L^1_{loc}(\R^3):\|\nabla^k u\|_{L^r}<\infty\},\|u\|_{D^{k,r}}:=\|\nabla^k u\|_{L^r}\\
D^{k}:=D^{k,2},D^1_{0}:=\{u\in L^6:\|\nabla u\|_{L^2}<\infty\}.
\end{array}\right.
\end{align*}

We define the system parameters $P$, $f$, $\mu$, $\lambda$ as follows. For the pressure function $P=P(\rho)$ and the external force $f$, we assume that
\begin{align}\label{condition on P}
\mbox{$P(\rho)=a\rho^\gamma$ with constants $a>0$ and $\gamma\ge1$;}
\end{align}
\begin{align}\label{condition on f}
\mbox{there exists $\psi\in H^2$ such that $f=\nabla\psi$ and $\psi(x)\to 0$ as $|x|\to\infty$.}
\end{align}
The viscosity coefficients $\mu$ and $\lambda$ are assumed to satisfy
\begin{align}\label{assumption on viscosity}
7\mu>\lambda>0.
\end{align}

Next, we define $\tilde\rho$ as mentioned at the beginning of this section. Given a constant densty $\rho_{\infty}>0$, we say that $(\tilde\rho,0)$ is a {\it steady state solution} to \eqref{NS} if $\tilde\rho\in C^2(\R^3)$ and the following holds
\begin{align}
\label{eqn for steady state} \left\{ \begin{array}{l}
\nabla P(\tilde\rho(x)) =\tilde\rho(x)\nabla\psi(x), \\
\lim\limits_{|x|\rightarrow\infty}\tilde\rho(x) = \rho_{\infty}.
\end{array}\right.
\end{align}
Given $\rho_\infty>0$, we further assume that
\begin{align}\label{bound on psi}
-\int_0^{\rho_\infty}\frac{P'(\rho)}{\rho} d\rho<\inf_{x\in\R^3} \psi(x)\le\sup_{x\in\R^3} \psi(x)<\int_{\rho_\infty}^\infty \frac{P'(\rho)}{\rho} d\rho,
\end{align} 
and by solving \eqref{eqn for steady state}, $\trho$ can be expressed explicitly as follows:
\[
\trho(x)=
\begin{cases}
\dis\rho_{\infty}\exp(\frac{1}{a}\psi(x)), & \text{for } \gamma=1 \\
    \dis(\rho_{\infty}^{\gamma-1}+\frac{\gamma-1}{a\gamma}\psi(x))^\frac{1}{\gamma-1}, & \text{for } \gamma>1.
\end{cases}
\]
We recall a useful lemma from \cite{LM11} about the existence of steady state solution $(\tilde\rho,0)$ to  \eqref{NS} which can be stated as follows:
\begin{lem}\label{existence for steady state lem}
Given $\rho_\infty>0$, if we assume that $P$, $f$, $\psi$ satisfy \eqref{condition on P}-\eqref{assumption on viscosity} and \eqref{bound on psi}, then there exists positive constants $\rho_1,\rho_2,\delta$ and a unique solution $\tilde\rho$ of \eqref{eqn for steady state} satisfying $\tilde\rho-\rho_\infty\in H^2\cap W^{2,6}$ and 
\begin{align}\label{1.1.17-2}
\rho_1<\rho_1+\delta\le\tilde\rho\le\rho_2-\delta<\rho_2.
\end{align}
\end{lem}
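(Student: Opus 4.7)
The approach is to reduce the first-order system \eqref{eqn for steady state} to a pointwise algebraic identity, solve it explicitly, and then read off the stated bounds and regularity. Introduce
\begin{equation*}
G(\rho):=\int_{\rho_\infty}^\rho \frac{P'(s)}{s}\,ds,\qquad\rho>0,
\end{equation*}
so that $G$ is smooth and strictly increasing with $G(\rho_\infty)=0$, and the equation $\nabla P(\tilde\rho)=\tilde\rho\nabla\psi$ becomes $\nabla G(\tilde\rho)=\nabla\psi$. Since $\R^3$ is connected and $\tilde\rho-\rho_\infty,\psi\to 0$ at infinity, this forces $G(\tilde\rho)\equiv\psi$ pointwise, which is precisely the implicit form of the explicit expression stated just before the lemma.

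For existence and the bounds \eqref{1.1.17-2}, I would examine the range of $G$, the open interval
\begin{equation*}
I:=\Bigl(-\int_0^{\rho_\infty}\frac{P'(s)}{s}\,ds,\ \int_{\rho_\infty}^\infty\frac{P'(s)}{s}\,ds\Bigr).
\end{equation*}
By the Sobolev embedding $H^2(\R^3)\hookrightarrow L^\infty$ the potential $\psi$ is bounded, and hypothesis \eqref{bound on psi} places $[\inf\psi,\sup\psi]$ in a compact subset of $I$. Hence $G^{-1}$ is smooth on a neighbourhood of this set and $\tilde\rho:=G^{-1}\circ\psi$ is a $C^2$ function whose image lies in a compact subset $[\rho_1+\delta,\rho_2-\delta]\subset(0,\infty)$, producing the stated constants $\rho_1,\rho_2,\delta$.

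For the regularity $\tilde\rho-\rho_\infty\in H^2\cap W^{2,6}$, write $\tilde\rho-\rho_\infty=H(\psi)$ with $H:=G^{-1}-\rho_\infty$ smooth and vanishing at $0$, and apply the chain rule:
\begin{equation*}
\nabla\tilde\rho=H'(\psi)\nabla\psi,\qquad \nabla^2\tilde\rho=H''(\psi)\,\nabla\psi\otimes\nabla\psi+H'(\psi)\nabla^2\psi.
\end{equation*}
Boundedness of $H'(\psi),H''(\psi)$ combined with $\psi,\nabla\psi,\nabla^2\psi\in L^2$ and Sobolev's $H^1(\R^3)\hookrightarrow L^6$ applied to $\nabla\psi$ gives $\tilde\rho-\rho_\infty\in H^2$ at once. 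For the $W^{2,6}$ conclusion I would bootstrap via the equation: rearranging \eqref{eqn for steady state} as $\nabla\tilde\rho=\tilde\rho\, P'(\tilde\rho)^{-1}\nabla\psi$ and differentiating once more, the right side inherits $W^{1,6}$ regularity from $\psi$, as carried out in \cite{LM11}.

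Uniqueness is immediate: any two $C^2$ solutions $\tilde\rho_1,\tilde\rho_2$ satisfy $\nabla(G(\tilde\rho_1)-G(\tilde\rho_2))\equiv 0$, so the difference $G(\tilde\rho_1)-G(\tilde\rho_2)$ is a constant that must vanish by the decay $\tilde\rho_i\to\rho_\infty$, and strict monotonicity of $G$ gives $\tilde\rho_1\equiv\tilde\rho_2$. The main obstacle I anticipate is the $W^{2,6}$ part, since the naive chain-rule term $|\nabla\psi|^2$ only lies in $L^3$ under $\psi\in H^2$; the algebraic structure of \eqref{eqn for steady state} must be exploited to close the bootstrap, as in \cite{LM11}.
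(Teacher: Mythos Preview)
The paper does not actually prove this lemma: it is introduced with the phrase ``We recall a useful lemma from \cite{LM11}'' and no argument is supplied. Your proposal therefore goes well beyond the paper's treatment, and your sketch is the standard and correct route---reducing \eqref{eqn for steady state} to the scalar identity $G(\tilde\rho)=\psi$ via the enthalpy $G(\rho)=\int_{\rho_\infty}^\rho P'(s)/s\,ds$, inverting on the compact range guaranteed by \eqref{bound on psi}, and reading off the bounds and regularity by the chain rule. Your identification of the $W^{2,6}$ step as the delicate point (since $\psi\in H^2$ alone gives $|\nabla\psi|^2\in L^3$ and $\nabla^2\psi\in L^2$, neither of which is $L^6$) and your deferral to \cite{LM11} for that part is consistent with the paper's own reliance on that reference.
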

From now on, we fix $\rho_\infty>0$ and choose $\trho$ satisfying Lemma~\ref{existence for steady state lem}. And for the sake of simplicity, we also write $P=P(\rho)$ and $\tilde P=P(\trho)$ unless otherwise specified.

We give the definitions for strong solution and maximal time of existence as follows.

\begin{defn}
We say that $(\rho,u)$ is a (local) {\it strong solution} of \eqref{NS} if for some $T>0$ and $q'\in(3,6]$, we have
\begin{align}\label{def of strong sol} 
\left\{ \begin{array}{l}
0\le\rho\in C([0,T],W^{1,q'}),\qquad\rho_t\in C([0,T],L^{q'}),\\
u\in C([0,T], D^1\cap D^2)\cap L^2(0,T; D^{2,q'}),\\
\rho^\frac{1}{2}u_t\in L^\infty(0,T;L^2),\qquad u_t\in L^2(0,T;D^1).
\end{array}\right.
\end{align}
Furthermore, $(\rho,u)$ satisfy the following conditions: 
\begin{itemize}
\item For all $0\le t_1\le t_2\le T$ and $C^1$ test functions $\varphi\in\mathcal{D}(\R^3\times(-\infty,\infty))$ which are Lipschitz on $\R^3\times[t_1,t_2]$ with $\supp\varphi(\cdot,\tau)\subset K$, $\tau\in[t_1,t_2]$, where $K$ is compact, it holds 
\begin{align}\label{WF1}
\left.\int_{\R^3}\rho(x,\cdot)\varphi(x,\cdot)dx\right|_{t_1}^{t_2}=\int_{t_1}^{t_2}\int_{\R^3}(\rho\varphi_t + \rho u\cdot\nabla\varphi)dxd\tau.
\end{align}
\item For test functions $\varphi$ which are locally Lipschitz on $\R^3 \times [0, T]$ and for which $\varphi$, $\varphi_t$, $\nabla\varphi \in L^2(\R^3 \times (0,T))$, $\nabla\varphi \in L^\infty(\R^3 \times (0,T))$ and $\varphi(\cdot,T) = 0$, it holds
\begin{align}\label{WF2}
\left.\int_{\R^3}(\rho u^{j})(x,\cdot)\varphi(x,\cdot)dx\right|_{t_1}^{t_2}=&\int_{t_1}^{t_2}\int_{\R^3}[\rho u^{j}\varphi_t + \rho u^{j}u\cdot\nabla\varphi + (P-\tilde{P})\varphi_{x_j}]dxd\tau\notag\\
& - \int_{t_1}^{t_2}\int_{\R^3}[\mu\nabla u^{j}\cdot\nabla\varphi + \lambda(\divv(u))\varphi_{x_j}]dxd\tau\\
&+ \int_{t_1}^{t_2}\int_{\R^3}(\rho f - \nabla \tilde{P})\cdot\varphi dxd\tau.\notag
\end{align}
\end{itemize}
\end{defn}

\begin{defn}
We define $T^*\in(0, \infty)$ to be the {\it maximal time of existence} of a strong solution $(\rho,u)$ to \eqref{NS} if for any $0<T<T^*$, $(\rho,u)$ solves \eqref{NS} in $[0, T]\times\R^3$ and satisfies \eqref{def of strong sol}-\eqref{WF2}. Moreover, the conditions \eqref{def of strong sol}-\eqref{WF2} fail to hold when $T=T^*$.
\end{defn}

We are ready to state the following main results of this paper which are summarised in Theorem~\ref{Main thm for gamma>1}-\ref{Main thm for gamma=1}:

\begin{thm}\label{Main thm for gamma>1}
Given $\rho_\infty>0$, let $\trho$ be the steady state solution to \eqref{eqn for steady state}. Let $(\rho,u)$ be a strong solution to the Cauchy problem \eqref{NS} satisfying \eqref{condition on P}-\eqref{assumption on viscosity} with $\gamma>1$. Assume that the initial data $(\rho_0,u_0)$ satisfy
\begin{align}\label{condition on IC1}
\rho_0\ge0,\qquad\rho_0-\trho\in L^1\cap H^1\cap W^{1,\tilde q},\qquad u_0\in D^1\cap D^2,
\end{align}
for some $\tilde q>3$ and the compatibility condition
\begin{align}\label{compatibility condition}
-\mu\Delta u_0-\lambda\nabla\divv(u_0)+\nabla P(\rho_0)-\rho_0 f=\rho_0^\frac{1}{2}g,
\end{align}
for some $g \in L^2$. If $T^*<\infty$ is the maximal time of existence, then we have
\begin{align}\label{blow-up 1}
\lim_{T\to T^*}(\sup_{\R^3\times[0,T]}|\rho|+\|\rho^\frac{1}{2}u\|_{L^s(0,T;L^r)})=\infty,
\end{align}
for some $r$, $s$ that satisfy
\begin{align}\label{conditions on r s}
\frac{2}{s}+\frac{3}{r}\le1,\qquad r\in(3,\infty],\qquad s>\frac{3}{2}.
\end{align}
\end{thm}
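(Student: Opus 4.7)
The plan is by contradiction: assume on $[0, T^*)$ one has $\sup_{\R^3\times[0,T^*)}|\rho|\le M$ and $\|\rho^{1/2}u\|_{L^s(0,T^*;L^r)}\le M$ under \eqref{conditions on r s}; then I would derive a priori estimates on $(\rho, u)$ strong enough to recover the class \eqref{def of strong sol} at $t = T^*$, allowing a local-existence theorem to extend the solution past $T^*$ and contradicting the maximality of $T^*$. Because the steady state $\trho$ is pinched between positive constants by Lemma~\ref{existence for steady state lem}, every quantity built from $\trho$, $P(\trho)$, or $f=\nabla\psi$ is treated as a fixed Sobolev inhomogeneity throughout.

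The first substantive bound combines the $L^2$ energy identity (obtained by testing \eqref{WF2} against $u$, with $\sup\rho\le M$ and \eqref{bound on psi}) with a $\dot u$-test of the momentum equation. The latter produces the good dissipation $\int\rho|\dot u|^2$ alongside $\frac{d}{dt}\bigl(\frac{\mu}{2}\|\nabla u\|_{L^2}^2+\frac{\lambda}{2}\|\divv u\|_{L^2}^2\bigr)$; the dangerous piece is the convective term $\int\rho(u\cdot\nabla u)\cdot\dot u$, which I would bound via H\"older as
\[
\|\rho^{1/2}u\|_{L^r}\,\|\nabla u\|_{L^{2r/(r-2)}}\,\|\rho^{1/2}\dot u\|_{L^2}
\]
and then interpolate $\|\nabla u\|_{L^{2r/(r-2)}}\le\|\nabla u\|_{L^2}^{1-3/r}\|\nabla u\|_{L^6}^{3/r}$. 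The $L^6$ factor is controlled by the elliptic system $\mu\Delta u+\lambda\nabla\divv u=\rho\dot u+\nabla(P-\tilde P)-(\rho-\trho)\nabla\psi$, giving $\|\nabla u\|_{L^6}\lesssim\|\rho\dot u\|_{L^2}+\text{l.o.t.}\lesssim M^{1/2}\|\rho^{1/2}\dot u\|_{L^2}+\text{l.o.t.}$. A Young split chosen so that the exponent of $\|\rho^{1/2}\dot u\|_{L^2}$ is exactly $2$ leaves a residual time weight $\|\rho^{1/2}u\|_{L^r}^{2r/(r-3)}$ — integrable in $t$ because $2r/(r-3)\le s$ is equivalent to $\tfrac{2}{s}+\tfrac{3}{r}\le 1$ — and Gronwall closes the bound on $\|\nabla u\|_{L^\infty_tL^2}$ and $\int_0^{T^*}\!\!\int\rho|\dot u|^2$.

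The second level is standard once the first is in hand: differentiate \eqref{NS}$_2$ in $t$, test against $\dot u$, and integrate to obtain $\sup_t\|\rho^{1/2}\dot u\|_{L^2}^2+\int_0^{T^*}\|\nabla\dot u\|_{L^2}^2$, where the initial value at $t=0$ is finite thanks to the compatibility condition \eqref{compatibility condition}. Elliptic regularity for $u$ then upgrades these to $L^\infty_t$ bounds on $\|\nabla u\|_{L^p}$ for $p\in[2,6]$ and on $\|\nabla^2 u\|_{L^2}$; transport estimates on $\rho-\trho$ along $u$ propagate the initial $H^1\cap W^{1,\tilde q}$ regularity via an $L^1_t L^\infty_x$ bound on $\nabla u$ (gained from $\|\nabla^2 u\|_{L^2}\cap\|\nabla^2 u\|_{L^{q'}}$ and Sobolev embedding). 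Together these match \eqref{def of strong sol} at $t=T^*$, giving the contradiction.

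The main obstacle is the combined effect of the non-constant background $\trho$ and the admission of vacuum. Without the external force, the Poisson equation \eqref{poisson in F} degenerates to $\Delta F=\divv(\rho\dot u)$, whereas here the divergence-form coefficient $\trho$ and the extra inhomogeneity $\trho^{-1}(\rho-\trho)\nabla P(\trho)$ produced by \eqref{steady state} must be bookkept through every H\"older step; fortunately $\|\trho^{-1}(\rho-\trho)\nabla P(\trho)\|_{L^2}$ is controlled by $\|\rho-\trho\|_{L^2}$ precisely \emph{because} $\sup\rho$ is bounded, which is where that half of the hypothesis \eqref{blow-up 1} enters. At the same time, the absence of a positive lower bound on $\rho$ forces every convective estimate to be phrased through the density-weighted quantities $\rho^{1/2}u$ and $\rho^{1/2}\dot u$, which is why the Serrin hypothesis is stated in terms of $\|\rho^{1/2}u\|_{L^s_tL^r_x}$ rather than $\|u\|_{L^s_tL^r_x}$.
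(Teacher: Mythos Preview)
Your proposal is correct and follows the same architecture as the paper's proof: contradiction hypothesis \eqref{blow-up 1 not}, a first-level estimate on $\|\nabla u\|_{L^\infty_tL^2}$ and $\int\rho|\dot u|^2$ closed via the Serrin bound and interpolation (Lemma~\ref{estimate on nabla u lem}), a second-level $\dot u$ estimate (Lemma~\ref{higher estimate on nabla u lem}), the log-type transport argument for $\nabla\rho$ (Lemma~\ref{higher estimate on rho and u lem}), and extension past $T^*$. The only packaging difference is that the paper routes the intermediate $L^p$ bounds on $\nabla u$ through the effective viscous flux $F$ and vorticity $\omega$ via \eqref{bound on F and omega in terms of u}--\eqref{bound on u in terms of F and omega}, while you invoke the Lam\'e elliptic estimate for $u$ directly with the right-hand side split as $\rho\dot u+\nabla(P-\tilde P)-(\rho-\trho)\nabla\psi$; these are equivalent because the pressure term is in gradient form and hence costs only $\|P-\tilde P\|_{L^p}$ rather than $\|\nabla\rho\|_{L^p}$.
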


\begin{thm}\label{Main thm for gamma=1}
Let $(\rho,u)$ be a strong solution to the Cauchy problem \eqref{NS} satisfying \eqref{condition on P}-\eqref{assumption on viscosity} with $\gamma=1$. Assume that the initial data $(\rho_0,u_0)$ satisfy \eqref{condition on IC1}-\eqref{compatibility condition}. Suppose that the initial density $\rho_0$ further satisfies
\begin{equation}\label{further assumption on initial density}
\inf_{x\in\R^3}\rho_0(x)>0.
\end{equation}
If $T^*<\infty$ is the maximal time of existence, then we have
\begin{align}\label{blow-up 2}
\lim_{T\to T^*}\sup_{\R^3\times[0,T]}|\rho|=\infty.
\end{align}
\end{thm}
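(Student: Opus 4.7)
The plan is to argue by contradiction: assume $T^*<\infty$ is the maximal time of existence of the strong solution $(\rho,u)$ and that $\bar\rho:=\sup_{\R^3\times[0,T^*)}\rho<\infty$, and derive enough a priori regularity on $(\rho,u)$ up to $T^*$ to extend the strong solution past $T^*$, contradicting maximality. The scheme parallels that of Theorem~\ref{Main thm for gamma>1}, but the Serrin-type bound on $u$ is replaced throughout by the identity $\partial_t P(\rho) = -\divv(P(\rho)u)$ that is peculiar to the isothermal case $\gamma=1$.

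First, the standard $L^2$ balance gives $\sqrt{\rho}\,u\in L^\infty(0,T^*;L^2)$, $\nabla u\in L^2(0,T^*;L^2)$ and $\rho-\trho\in L^\infty(0,T^*;L^2)$. Using the decomposition from \eqref{def of u_p and u_s}, Calder\'on-Zygmund theory applied to the elliptic system \eqref{def of u_p and u_s}$_1$ yields $\|\nabla u_p\|_{L^r}\lesssim \|P(\rho)-P(\trho)\|_{L^r}$ for $r\in(1,\infty)$, which under $\bar\rho<\infty$ is controlled by $L^2\cap L^\infty$ interpolation of $\rho-\trho$. Differentiating \eqref{def of u_p and u_s}$_1$ in $t$ and substituting $\partial_t P(\rho) = -\divv(P(\rho)u)$ produces
\[
\mu\Delta(\partial_t u_p) + \lambda\nabla\divv(\partial_t u_p) = -\nabla\divv(P(\rho)u),
\]
whence Calder\'on-Zygmund again gives $\|\partial_t u_p\|_{L^2}\lesssim \|P(\rho)u\|_{L^2}\le a\sqrt{\bar\rho}\,\|\sqrt{\rho}\,u\|_{L^2}$, a quantity already under control by the energy bound. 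This estimate, which fails for $\gamma>1$, is exactly what takes the place of the Serrin hypothesis of Theorem~\ref{Main thm for gamma>1}.

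Next I would test the $u_s$-equation \eqref{def of u_p and u_s}$_2$ against $\partial_t u_s$, producing $\int\rho|\dot u_s|^2$ and $\tfrac{d}{dt}\int(\mu|\nabla u_s|^2+\lambda|\divv u_s|^2)$ on the left and, on the right, a convection term handled in the usual Hoff-style material-derivative form, a $\rho\,\partial_t u_p$ term absorbed via the previous paragraph, and the potential-force contribution $\trho^{-1}(\rho-\trho)\nabla P(\trho)$ which is harmless thanks to the pointwise bounds $\rho_1\le\trho\le\rho_2$ from Lemma~\ref{existence for steady state lem} together with the $L^2$ energy bound on $\rho-\trho$. A Gronwall argument then yields $\|\nabla u_s\|_{L^\infty(0,T^*;L^2)}+\|\sqrt\rho\,\dot u_s\|_{L^2(0,T^*;L^2)}<\infty$. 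Applying the material derivative to the momentum equation as in Section~\ref{proof of main 1 section} promotes these to $\sqrt\rho\,\dot u\in L^\infty(0,T^*;L^2)$ and $\nabla\dot u\in L^2(0,T^*;L^2)$; elliptic theory on \eqref{poisson in F} together with Sobolev embedding then gives $F\in L^2(0,T^*;L^\infty)\subset L^1(0,T^*;L^\infty)$.

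With $F\in L^1(0,T^*;L^\infty)$ and $\inf_{\R^3}\rho_0>0$ from \eqref{further assumption on initial density}, the isothermal mass equation rewritten via $(\mu+\lambda)\divv u = \trho F + a(\rho-\trho)$ becomes, along a particle trajectory $X(t)$,
\[
(\mu+\lambda)\frac{d}{dt}\ln\rho(X(t),t) + a\rho(X(t),t) = a\trho(X(t)) - \trho(X(t))\,F(X(t),t),
\]
a nonlinear-damping ODE whose integration yields the pointwise lower bound $\rho(x,t)\ge c(T)>0$ for every $T<T^*$, by Hoff's Lagrangian argument. A standard bootstrap then recovers all remaining regularity required by \eqref{def of strong sol}, contradicting the maximality of $T^*$. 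The main obstacle is proving that the Calder\'on-Zygmund estimate on $\partial_t u_p$ is strong enough to drive the first-order $u_s$ estimate to closure without any Serrin-type control on $u$; in particular, showing that the nonlinear convection contribution can be absorbed into $\int\rho|\dot u_s|^2$ and the $\partial_t u_p$ bound alone is precisely where the isothermal hypothesis $\gamma=1$ makes the essential difference with Theorem~\ref{Main thm for gamma>1}.
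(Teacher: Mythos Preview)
Your overall strategy is correct and closely parallels the paper's: contradiction from a density upper bound, the decomposition $u=u_p+u_s$, the isothermal identity $\partial_t P=-\divv(Pu)$ to control $\partial_t u_p$, energy estimates for $u_s$, then material-derivative bounds and a bootstrap. However, one essential ingredient is missing from your outline, and without it the $u_s$ estimate does not close.

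When you test \eqref{def of u_p and u_s}$_2$ against $\partial_t u_s$, the convection contribution is $\int\rho(u\cdot\nabla u)\cdot\partial_t u_s$. You propose to handle it ``in the usual Hoff-style material-derivative form,'' but what Hoff actually does in \cite{hoff95}, and what the paper does in Lemma~\ref{estimate on u_s lemma}, is to bound it via
\[
\int\rho|u|^2|\nabla u|^2\le\Bigl(\int\rho|u|^4\Bigr)^{1/2}\Bigl(\int|\nabla u|^4\Bigr)^{1/2},
\]
then split $\nabla u=\nabla u_s+\nabla u_p$, interpolate $\|\nabla u_s\|_{L^4}\le C\|\nabla u_s\|_{L^2}^{1/4}\|\Delta u_s\|_{L^2}^{3/4}$, and absorb via the elliptic bound $\|\Delta u_s\|_{L^2}\lesssim\|\sqrt\rho\,\partial_t u_s\|_{L^2}+\text{(lower order)}$. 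For this chain to work one must already know $\sup_{t}\int\rho|u|^4\,dx<\infty$. That is exactly the paper's stand-alone Lemma~4.2 (the estimate \eqref{L4 estimate on u}), and it is the only place where the viscosity restriction $7\mu>\lambda$ of \eqref{assumption on viscosity} is used. No amount of rewriting $u\cdot\nabla u=\dot u-u_t$ absorbs this term into $\int\rho|\partial_t u_s|^2$; the $L^4$ velocity bound is the missing piece, and your Gronwall claim for $u_s$ will not close without it.

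A secondary point: you postpone the pointwise lower bound on $\rho$ to the end, extracting it from $F\in L^1(0,T^*;L^\infty)$ after the $\dot u$ estimates. This is arguably cleaner than the paper's presentation, which asserts \eqref{lower bound on rho} at the outset. But notice that in the $u_s$ estimate the force term $\int\trho^{-1}(\rho-\trho)\nabla P(\trho)\cdot\partial_t u_s$ is controlled in the paper by Cauchy--Schwarz against $\sqrt\rho\,\partial_t u_s$, which requires $\rho^{-1/2}\in L^\infty$, i.e.\ the lower bound on $\rho$. If you really want to defer that lower bound, you must treat this term differently---for instance by integrating it by parts in time and using $\partial_t\rho=-\divv(\rho u)$---and you should say so explicitly rather than calling it ``harmless.''
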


The rest of the paper is organised as follows. In section~\ref{prelim section}, we recall some known facts and useful inequalities which will be used in later analysis. In section~\ref{proof of main 1 section}, we give the proof of Theorem~\ref{Main thm for gamma>1} by obtaining some necessary bounds on the strong solutions. In section~\ref{proof of main 2 section}, we give the proof of Theorem~\ref{Main thm for gamma=1} by introducing a different approach for the isothermal case $\gamma=1$.

\section{Preliminaries}\label{prelim section}

In this section, we give some known facts and useful inequalities. We first state the following local-in-time existence and uniqueness of strong solutions to \eqref{NS} with non-negative initial density (references can be found in \cite{nash} and \cite{tani}):

\begin{prop}\label{Local-in-time existence prop}
Let $\trho$ be a steady state solution and $(\rho_0-\trho,u_0)$ be given initial data satisfying \eqref{condition on IC1}-\eqref{compatibility condition}, then there exists a positive time $T>0$ and a unique strong solution $(\rho,u)$ to \eqref{NS} defined on $\R^3\times(0,T]$.
\end{prop}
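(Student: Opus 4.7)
The plan is to establish Proposition~\ref{Local-in-time existence prop} via the standard linearization-iteration scheme tailored to the compressible Navier--Stokes system with possibly vanishing density, in the spirit of Cho--Kim and the references \cite{nash, tani}. Starting from $u^{0}\equiv 0$, I would construct iterates $(\rho^{k+1},u^{k+1})$ in two steps. First, solve the linear transport equation
\begin{equation*}
\rho^{k+1}_t + \divv(\rho^{k+1} u^{k}) = 0, \qquad \rho^{k+1}(\cdot,0)=\rho_0,
\end{equation*}
by the method of characteristics; since $\rho_0\ge 0$, this yields $\rho^{k+1}\ge 0$ with the regularity $\rho^{k+1}-\trho\in C([0,T];W^{1,q'})$ and $\rho^{k+1}_t\in C([0,T];L^{q'})$ determined by $u^{k}$. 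Second, with $\rho^{k+1}$ in hand, solve the linear parabolic system
\begin{equation*}
\rho^{k+1}u^{k+1}_t - \mu\Delta u^{k+1} - \lambda\nabla\divv(u^{k+1}) = -\rho^{k+1}(u^{k}\cdot\nabla)u^{k+1} - \nabla P(\rho^{k+1}) + \rho^{k+1} f,
\end{equation*}
with $u^{k+1}(\cdot,0)=u_0$. Although this system degenerates where $\rho^{k+1}$ vanishes, classical theory applies provided we can justify an $L^2$-bound on $(\rho^{k+1})^{1/2} u^{k+1}_t$ at $t=0$, and this is exactly what the compatibility condition~\eqref{compatibility condition} furnishes with $g$ playing the role of $(\rho^{k+1})^{1/2}u^{k+1}_t(\cdot,0)$.

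Next I would derive $k$-uniform bounds on $(\rho^{k+1},u^{k+1})$ in the function classes of~\eqref{def of strong sol} on a small time interval $[0,T]$, with $T$ independent of $k$. The key estimates are: (i) $W^{1,q'}$-bounds for $\rho^{k+1}-\trho$ from the transport equation via Gronwall (using bounds on $\nabla u^{k}$ coming from the previous step); (ii) $L^\infty_t L^2_x$ and $L^2_t D^1_x$ bounds for $(\rho^{k+1})^{1/2}u^{k+1}_t$ obtained by differentiating the momentum equation in time and testing with $u^{k+1}_t$, where the term originating from the transport of $\rho^{k+1}$ is absorbed using the continuity equation; (iii) $L^\infty_t D^2_x\cap L^2_t D^{2,q'}_x$ bounds for $u^{k+1}$ via elliptic regularity applied to the stationary Lam\'e operator $\mu\Delta + \lambda\nabla\divv$, viewing $\rho^{k+1}u^{k+1}_t$, the convection term, $\nabla P(\rho^{k+1})$, and $\rho^{k+1}f$ as source terms controlled by (i)--(ii). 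The term $\rho^{k+1}f$ is handled directly using $f=\nabla\psi\in H^1$ and bounds on $\rho^{k+1}$.

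To promote the uniform bounds to convergence, I would examine the differences $\delta\rho^{k+1}=\rho^{k+1}-\rho^{k}$ and $\delta u^{k+1}=u^{k+1}-u^{k}$, which satisfy linear equations with forcing of the form (derivative of previous iterate)$\times\,\delta u^{k}$ and analogous combinations. Estimating $\delta\rho^{k+1}$ in $L^2$ from the transport equation and $\delta u^{k+1}$ in an energy norm (using $(\rho^{k+1})^{1/2}\delta u^{k+1}$ and $\nabla\delta u^{k+1}$ in $L^2$) yields, after choosing $T$ sufficiently small, a contraction in these lower-order norms. Combined with the uniform higher-order bounds, a standard interpolation then produces strong convergence to a limit $(\rho,u)$ satisfying \eqref{def of strong sol} and the weak formulations \eqref{WF1}--\eqref{WF2}. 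Uniqueness follows by performing the same difference argument on any two strong solutions with common initial data and applying Gronwall.

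The main obstacle is the possible presence of vacuum, which is why the mere assumption $\rho_0\ge 0$ is not enough and the compatibility condition~\eqref{compatibility condition} is required: without it, one cannot even define $u^{k+1}_t(\cdot,0)$ in $L^2$ with the natural weight, and the uniform estimates on $(\rho^{k+1})^{1/2}u^{k+1}_t$ collapse. A secondary technical point is ensuring that the external-force contribution $\rho^{k+1}f$, and its coupling with the steady state via $\nabla\tilde P$ (which is only in $H^1\cap W^{1,6}$ by Lemma~\ref{existence for steady state lem}), does not destroy the iteration; this is handled by absorbing $\nabla\tilde P - \trho f = 0$ from \eqref{steady state} so that only the more regular combination $(\rho^{k+1}-\trho)f$ (equivalently $\trho^{-1}(\rho^{k+1}-\trho)\nabla P(\trho)$) enters the estimates.
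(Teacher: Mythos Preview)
The paper does not actually prove this proposition: it is stated as a known fact with a parenthetical citation to \cite{nash} and \cite{tani}, and no argument is given. So there is no ``paper's own proof'' to compare against.

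Your sketch is the standard Cho--Kim iteration for compressible Navier--Stokes with possible vacuum, and it is the right framework here. In fact it is more accurate than the paper's citations: Nash and Tani treat strictly positive density and do not address the compatibility condition~\eqref{compatibility condition}, whereas the vacuum case with \eqref{compatibility condition} is precisely the Cho--Kim setting. Your handling of the force term via the cancellation $\nabla\tilde P-\trho f=0$ so that only $(\rho^{k+1}-\trho)f$ remains is also the correct adaptation to the potential-force problem. One minor point: in your step (ii) you should be explicit that the $t=0$ value of $(\rho^{k+1})^{1/2}u^{k+1}_t$ is bounded in $L^2$ uniformly in $k$ because each $\rho^{k+1}(\cdot,0)=\rho_0$, so the compatibility condition gives exactly the same $g$ at every iteration step; otherwise the estimate could deteriorate along the sequence. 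With that clarified, your outline is sound.
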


Next, we recall the following Gagliardo-Nirenberg inequalities:

\begin{prop}
For $p\in[2,6]$, $q\in(1,\infty)$ and $r\in(3,\infty)$, there exists some generic constant $C>0$ such that for any $h_1\in H^1$ and $h_2\in L^q\cap D^{1,r}$, we have
\begin{align}
\|h_1\|^p_{L^p}&\le C\|h_1\|^\frac{6-p}{2}_{L^2}\|\nabla h_1\|^\frac{3p-6}{2}_{L^2},\label{GN1}\\
\|h_2\|_{L^\infty}&\le C\|h_2\|^\frac{q(r-3)}{3r+q(r-3)}_{L^q}\|\nabla h_2\|^\frac{3r}{3r+q(r-3)}_{L^r}.\label{GN2}
\end{align}
\end{prop}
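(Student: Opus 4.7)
The plan is to prove both bounds as standard interpolation/Sobolev inequalities in $\R^3$, treating the first as an $L^p$-interpolation between $L^2$ and $L^6$ (combined with the Sobolev embedding $D^1_0\hookrightarrow L^6$) and the second via a Morrey-type embedding combined with interpolation. Because both statements are scale-invariant under $h(\cdot)\mapsto h(\lambda\,\cdot)$, the exponents can be double-checked by a quick scaling analysis, which I would do first to make sure the formulas \eqref{GN1}--\eqref{GN2} are internally consistent and to fix the correct interpolation parameter $\theta$.

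For \eqref{GN1}, the plan is the following. Fix $p\in[2,6]$ and choose $\theta\in[0,1]$ with $\frac{1}{p}=\frac{\theta}{2}+\frac{1-\theta}{6}$, which gives $\theta=\frac{6-p}{2p}$. Standard $L^p$ interpolation yields $\|h_1\|_{L^p}\le\|h_1\|_{L^2}^{\theta}\|h_1\|_{L^6}^{1-\theta}$. Since $h_1\in H^1(\R^3)$, the Sobolev embedding gives $\|h_1\|_{L^6}\le C\|\nabla h_1\|_{L^2}$, so raising to the $p$-th power produces $\|h_1\|_{L^p}^{p}\le C\|h_1\|_{L^2}^{p\theta}\|\nabla h_1\|_{L^2}^{p(1-\theta)}$; plugging in $p\theta=\frac{6-p}{2}$ and $p(1-\theta)=\frac{3p-6}{2}$ gives exactly \eqref{GN1}.

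For \eqref{GN2}, since $r>3$ the Morrey embedding $D^{1,r}(\R^3)\hookrightarrow C^{0,1-3/r}$ gives a pointwise oscillation bound $|h_2(x)-h_2(y)|\le C\|\nabla h_2\|_{L^r}|x-y|^{1-3/r}$. I would combine this with the elementary fact that if $|h_2(x_0)|$ is large then $|h_2|$ stays comparably large on a ball of controlled radius around $x_0$, which together with $h_2\in L^q$ yields a ceiling on $|h_2(x_0)|$. Concretely, setting $M=\|h_2\|_{L^\infty}$ and $N=\|\nabla h_2\|_{L^r}$, the oscillation bound implies $|h_2|\ge M/2$ on a ball of radius $\sim(M/N)^{r/(r-3)}$, and then $L^q$-integrability forces $M^q\cdot(M/N)^{3r/(r-3)}\le C\|h_2\|_{L^q}^{q}$; solving for $M$ recovers the exponents in \eqref{GN2}. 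Alternatively, I would derive this by interpolating the Morrey estimate $\|h_2\|_{L^\infty}\le C\|h_2\|_{L^q}^{\alpha}\|\nabla h_2\|_{L^r}^{\beta}+C\|h_2\|_{L^q}$ (for a suitable scale, reducing to the homogeneous form by rescaling $h_2$).

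I expect the only mildly delicate step to be the $L^\infty$ interpolation \eqref{GN2}, since naively combining Morrey with $L^q$-integrability requires care about the bump-radius argument and about the homogeneity (one must use the full strength of working on $\R^3$ without boundary, so that scaling is available). The remaining exponent verification is a routine algebra check that I would carry out by matching the scaling exponents under $h_2(x)\mapsto h_2(\lambda x)$ to pin down $\theta=\tfrac{3r}{3r+q(r-3)}$ and $1-\theta=\tfrac{q(r-3)}{3r+q(r-3)}$, confirming the stated form of the inequality.
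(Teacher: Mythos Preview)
Your proof sketch is correct, but note that the paper does not actually prove this proposition: it is simply \emph{recalled} in the preliminaries as a standard Gagliardo--Nirenberg inequality, with no argument supplied. So there is no ``paper's own proof'' to compare against.

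That said, your approach is the standard one and is entirely sound. For \eqref{GN1} the interpolation between $L^2$ and $L^6$ combined with the Sobolev embedding $\|h_1\|_{L^6}\le C\|\nabla h_1\|_{L^2}$ is exactly right, and your exponent bookkeeping ($p\theta=\tfrac{6-p}{2}$, $p(1-\theta)=\tfrac{3p-6}{2}$) checks out. For \eqref{GN2} your Morrey-plus-ball argument is the cleanest elementary route: the oscillation bound gives $|h_2|\ge M/2$ on a ball of radius $R\sim(M/N)^{r/(r-3)}$, and then $\|h_2\|_{L^q}^q\gtrsim M^q R^3$ yields $M^{q+3r/(r-3)}\lesssim\|h_2\|_{L^q}^q N^{3r/(r-3)}$, which rearranges to the stated exponents. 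The scaling check you mention is a good sanity device but not needed for the argument itself.
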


We also recall the following two canonical functions, namely the effective viscous flux $F$ and vorticity $\omega$, which are defined by
\begin{equation}\label{def of F and omega}
\tilde\rho F=(\mu+\lambda)\divv\,u-(P(\rho)-P(\tilde\rho)),\qquad\omega=\omega^{j,k}=u^j_{x_k}-u^k_{x_j}.
\end{equation}
The following lemma gives some useful estimates on $u$ in terms of $F$ and $\omega$.
\begin{lem}
For $r_1,r_2\in(1,\infty)$ and $t>0$, there exists a universal constant $C$ which depends on $r_1$, $r_2$, $\mu$, $\lambda$, $a$, $\gamma$ and $\trho$ such that, the following estimates hold:
\begin{align}\label{bound on F and omega in terms of u}
\|\nabla F\|_{L^{r_1}}+\|\nabla\omega\|_{L^{r_1}}&\le C(\|\rho^\frac{1}{2} \dot{u}\|_{L^{r_1}}+\|(\rho-\trho)\|_{L^{r_1}})\notag\\
&\le C(\|\rho^\frac{1}{2} u_t\|_{L^{r_1}}+\|\rho^\frac{1}{2} u\cdot\nabla u\|_{L^{r_1}}+\|(\rho-\trho)\|_{L^{r_1}})
\end{align}
\begin{align}\label{bound on u in terms of F and omega}
||\nabla u(\cdot,t)||_{L^{r_2}}\le C(|| F(\cdot,t)||_{L^{r_2}}+||\omega(\cdot,t)||_{L^{r_2}}+||(\rho-\tilde\rho)(\cdot,t)||_{L^{r_2}}).
\end{align}
\end{lem}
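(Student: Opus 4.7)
The plan is to extract elliptic equations for $F$ and $\omega$ from \eqref{NS}$_2$ and then invoke Calder\'on--Zygmund theory. The starting point is to rewrite the momentum equation using \eqref{def of F and omega} together with the steady-state identity $\nabla\tilde P=\trho f$, which absorbs the force term into the form already recorded in \eqref{derivation for F}:
\[
\rho\dot u^{j}=(\trho F)_{x_j}+\mu\,\omega^{j,k}_{x_k}+(\rho-\trho)f^{j}.
\]
Taking the divergence kills the vorticity term by antisymmetry and reproduces the Poisson equation \eqref{poisson in F}, i.e.\ $\Delta(\trho F)=\divv\bigl(\rho\dot u-(\rho-\trho)f\bigr)$; taking the curl (and noting that $\trho f=\nabla\tilde P$ is curl-free) yields $\mu\,\Delta\omega^{j,l}=\partial_{x_l}[\rho\dot u^{j}-(\rho-\trho)f^{j}]-\partial_{x_j}[\rho\dot u^{l}-(\rho-\trho)f^{l}]$.

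For \eqref{bound on F and omega in terms of u}, I would apply $L^{r_1}$-boundedness of the Riesz operators $\nabla(-\Delta)^{-1}\divv$ and $\nabla(-\Delta)^{-1}\mathrm{curl}$ to conclude
\[
\|\nabla(\trho F)\|_{L^{r_1}}+\|\nabla\omega\|_{L^{r_1}}\le C\|\rho\dot u-(\rho-\trho)f\|_{L^{r_1}}.
\]
To transfer from $\nabla(\trho F)$ to $\nabla F$, I would write $\nabla F=\trho^{-1}\nabla(\trho F)-\trho^{-1}F\nabla\trho$ and absorb the smooth factor $\trho^{-1}$ together with the lower-order term involving $\nabla\trho\in L^\infty$ (obtained from $\trho-\rho_\infty\in W^{2,6}\hookrightarrow C^{1}$ via Lemma~\ref{existence for steady state lem}) into the constant. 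On the right-hand side, $\|(\rho-\trho)f\|_{L^{r_1}}\le\|f\|_{L^\infty}\|\rho-\trho\|_{L^{r_1}}$ since $f=\nabla P(\trho)/\trho$ is in $L^\infty$ by the explicit formula for $\trho$, while $\|\rho\dot u\|_{L^{r_1}}\le\|\rho^{1/2}\|_{L^\infty}\|\rho^{1/2}\dot u\|_{L^{r_1}}$. Expanding $\dot u=u_t+u\cdot\nabla u$ produces the second line.

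For \eqref{bound on u in terms of F and omega}, I would invoke the standard Helmholtz-type estimate
\[
\|\nabla u\|_{L^{r_2}}\le C\bigl(\|\divv\,u\|_{L^{r_2}}+\|\omega\|_{L^{r_2}}\bigr),
\]
which follows from the identity $\Delta u^{j}=\omega^{j,k}_{x_k}+(\divv\,u)_{x_j}$ together with Calder\'on--Zygmund on $\R^3$. Solving the definition of $F$ gives $(\mu+\lambda)\divv\,u=\trho F+(P-\tilde P)$; combined with $\trho\in L^\infty$ and the mean value estimate $|P(\rho)-P(\trho)|\le C|\rho-\trho|$ (with $C$ depending on the a priori upper bound for $\rho$), this delivers \eqref{bound on u in terms of F and omega}.

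The main obstacle is the commutation between the variable coefficient $\trho$ and the Laplacian: one cannot apply Calder\'on--Zygmund directly to $F$, so the argument rests on the uniform upper and lower bounds for $\trho$ from Lemma~\ref{existence for steady state lem} combined with $\nabla\trho\in L^\infty$ to absorb the commutator. A secondary subtlety is that an $L^\infty$ bound on $\rho$ enters implicitly through the factor $\rho^{1/2}$ in the first inequality and through the mean value control of $P(\rho)-P(\trho)$ in the second.
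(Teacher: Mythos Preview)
Your approach is essentially the same as the paper's, which is terse: it records the two elliptic equations $\Delta(\trho F)=\divv(\rho\dot u-\rho f+\nabla\tilde P)$ and $\mu\Delta\omega=\nabla\times(\rho\dot u-\rho f+\nabla\tilde P)$ (your form with $(\rho-\trho)f$ is the same, since $\nabla\tilde P=\trho f$) and then simply invokes ``standard $L^p$-estimate''. Your write-up supplies the Calder\'on--Zygmund and Helmholtz details the paper omits.

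One minor quibble: the lower-order commutator term $\trho^{-1}F\nabla\trho$ cannot literally be ``absorbed into the constant''---it contributes $C\|F\|_{L^{r_1}}\le C(\|\divv u\|_{L^{r_1}}+\|\rho-\trho\|_{L^{r_1}})$, and $\|\divv u\|_{L^{r_1}}$ is not among the stated right-hand side terms. The paper does not address this either; in every subsequent application of the lemma a bound on $\|\nabla u\|_{L^{r_1}}$ or $\|F\|_{L^{r_1}}$ is already in hand, so the slip is harmless in context, but strictly speaking the clean inequality you derive holds for $\nabla(\trho F)$ rather than $\nabla F$.
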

\begin{proof}
By the definitions of $F$ and $\omega$, and together with \eqref{NS}$_2$, $F$ and $\omega$ satisfy the elliptic equations
\begin{align*}
\Delta (\tilde\rho F)=\divv(\rho \dot{u}-\rho f+\nabla P(\tilde\rho))=\divv(\rho u_t+\rho u\cdot\nabla u-\rho f+\nabla P(\tilde\rho)),
\end{align*}
\begin{align*}
\mu\Delta\omega=\nabla\times(\rho \dot{u}-\rho f+\nabla P(\tilde\rho))=\nabla\times(\rho u_t+\rho u\cdot\nabla u-\rho f+\nabla P(\tilde\rho)),
\end{align*}
where $\dot{h}:=\dt h+u\cdot \nabla h$ is the material derivative on $h$. Hence by applying standard $L^p$-estimate, the estimates \eqref{bound on F and omega in terms of u}-\eqref{bound on u in terms of F and omega} follow.
\end{proof}
Finally, we recall the following inequality which was first proved in \cite{BKM84} for the case $\divv(u)\equiv0$ and was proved in \cite{HLX11} for compressible flows.
\begin{prop}
For $q\in(3,\infty)$, there is a positive constant $C$ which depends on $q$ such that the following estimate holds for all $\nabla u\in L^2\cap D^{1,q}$,
\begin{align}\label{log estimate on nabla u}
\|\nabla u\|_{L^\infty}\le C(&\|\divv(u)\|_{L^\infty}+\|\nabla\times u\|_{L^\infty}\ln(e+\|\nabla^2 u\|_{L^q})\notag\\
&+C\|\nabla u\|_{L^2}+C,
\end{align}
where $e$ is the base of the natural logarithm.
\end{prop}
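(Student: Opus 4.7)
The plan is to prove this logarithmic Sobolev-type inequality by representing $\nabla u$ as a zero-order Calderón--Zygmund (CZ) operator acting on $\divv(u)$ and $\omega=\nabla\times u$, and then to establish a log-type $L^\infty$ bound for such operators by a careful three-region decomposition of the kernel.

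First I would use the vector identity $\Delta u = \nabla\divv(u)-\nabla\times\omega$, which is valid for $u\in D^1\cap D^2$ (so $u\in L^6$ by the Sobolev embedding $D^1_0\hookrightarrow L^6$ and decays at infinity), to write
$$\nabla u \;=\; \mathcal T_1\bigl(\divv(u)\bigr) + \mathcal T_2(\omega),$$
where $\mathcal T_1,\mathcal T_2$ are matrix-valued compositions of Riesz transforms, hence bounded CZ operators on every $L^p$ with $1<p<\infty$ but not on $L^\infty$.

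Second, I would prove the following key lemma: for every convolution-type CZ operator $T$ of order zero and every $q>3$, there is $C=C(q,T)$ such that for all $h\in L^2(\R^3)\cap W^{1,q}(\R^3)$,
$$\|Th\|_{L^\infty}\le C\|h\|_{L^\infty}\ln\bigl(e+\|\nabla h\|_{L^q}\bigr)+C\|h\|_{L^2}+C.$$
Fix $x_0$ and write $Th(x_0)=\mathrm{p.v.}\int K(x_0-y)h(y)\,dy$ with $|K(y)|\le C|y|^{-3}$ and $K$ mean-zero over spheres. Split the integration domain into three pieces $|y-x_0|<r$, $r\le|y-x_0|\le R$, and $|y-x_0|>R$ for parameters $0<r\le R$ to be chosen. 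On the inner ball, the spherical mean-zero property lets one replace $h(y)$ by $h(y)-h(x_0)$; using the Morrey-type estimate $|h(y)-h(x_0)|\le C|y-x_0|^{1-3/q}\|\nabla h\|_{L^q}$, the integral is bounded by $Cr^{1-3/q}\|\nabla h\|_{L^q}$. On the outer region, Hölder with $K\in L^2(\{|y|>R\})$ gives a bound $CR^{-3/2}\|h\|_{L^2}$. On the middle annulus, the pointwise size estimate together with $\|h\|_{L^\infty}$ yields $C\|h\|_{L^\infty}\ln(R/r)$. Optimising by choosing $r=(e+\|\nabla h\|_{L^q})^{-q/(q-3)}$ and $R$ a fixed constant (large enough that the tail term is dominated by $\|h\|_{L^2}$) produces the stated logarithmic estimate.

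Third, apply the lemma to $h=\divv(u)$ and to each component of $h=\omega$. Since
$$\|\nabla h\|_{L^q}\le C\|\nabla^2 u\|_{L^q},\qquad \|h\|_{L^2}\le C\|\nabla u\|_{L^2},\qquad \|h\|_{L^\infty}\le\|\divv(u)\|_{L^\infty}+\|\omega\|_{L^\infty},$$
adding the two resulting inequalities and using the representation $\nabla u=\mathcal T_1(\divv(u))+\mathcal T_2(\omega)$ yields \eqref{log estimate on nabla u}.

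The main obstacle is the proof of the lemma: one must exploit the spherical cancellation of the CZ kernel on the inner ball (otherwise the singular kernel integrated against a merely $L^\infty$ function would diverge logarithmically), and balance the three scales so that the $\|\nabla^2 u\|_{L^q}$ norm appears only inside a single logarithm rather than as a genuine power. The choice $r^{1-3/q}\cdot\|\nabla h\|_{L^q}\sim 1$ is what converts the Morrey excess into a pure $\ln$-factor after summing over the annulus.
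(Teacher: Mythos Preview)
The paper does not prove this proposition at all; it is stated in the preliminaries and attributed to \cite{BKM84} (divergence-free case) and \cite{HLX11} (compressible case), so there is no in-paper argument to compare against. Your approach---writing $\nabla u$ as zero-order Calder\'on--Zygmund operators applied to $\divv(u)$ and $\omega$ via $\Delta u=\nabla\divv(u)-\nabla\times\omega$, and then proving a logarithmic $L^\infty$ bound for such operators by the near/annulus/far decomposition with the Morrey estimate on the inner ball---is precisely the standard proof in those references, and the scale choice $r^{1-3/q}\|\nabla h\|_{L^q}\sim 1$ with fixed $R$ is the correct optimisation. One small point: you should make sure the Morrey oscillation bound is applied to $h=\divv(u)$ and $h=\omega$ using only $\nabla h\in L^q$ (i.e.\ $\nabla^2 u\in L^q$) together with $h\in L^2$, since the hypothesis $\nabla u\in L^2\cap D^{1,q}$ does not directly place $h$ in $W^{1,q}$; the local Morrey estimate needs only $\nabla h\in L^q$, so this is fine, but it is worth stating explicitly.
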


\section{Proof of Theorem~\ref{Main thm for gamma>1}}\label{proof of main 1 section}

In this section, we give the proof of Theorem~\ref{Main thm for gamma>1}. Let $(\rho,u)$ be a strong solution to the system \eqref{NS} as described in Theorem~\ref{Main thm for gamma>1}. By performing standard $L^2$-energy estimate (see \cite{suen2021existence} for example), we readily have
\begin{align}\label{L^2 estimate blow up}
\sup_{0\le \tau\le t}\left((\|\rho^\frac{1}{2} u(\cdot,\tau)\|_{L^2}^2+\intox G(\rho(x,s))dx\right) + \int_{0}^{t}\|\nabla u(\cdot,\tau)\|_{L^2}^2d\tau\le C_0,
\end{align}
for all $t\in[0,T^*)$, where $C_0$ depends on the initial data but is independent of both $t$ and $T^*$. Here $G$ is a functional given by
\begin{align*}
\rho\int_{\trho}^\rho\frac{P(s)-P(\trho)}{s^2}ds=\rho\int_{\trho}^\rho\frac{as^\gamma-a\trho^\gamma}{s^2}ds.
\end{align*}
In order to prove Theorem~\ref{Main thm for gamma>1}, for the sake of contradiction, suppose that \eqref{blow-up 1} does not hold. Then there exists some constant $M_0>0$ such that
\begin{align}\label{blow-up 1 not}
\lim_{T\to T^*}(\sup_{\R^3\times[0,T]}|\rho|+\|\rho^\frac{1}{2}u\|_{L^s(0,T;L^r)})\le M_0.
\end{align}
We first obtain the estimates on $\nabla u$ and $u_t$ under \eqref{blow-up 1 not}:
\begin{lem}\label{estimate on nabla u lem}
Assume that \eqref{blow-up 1 not} holds, then for $t\in[0,T^*)$, we have
\begin{align}\label{estimate on nabla u}
\sup_{0\le \tau\le t}\|\nabla u(\cdot,\tau)\|^2_{L^2}+\Int\rho|u_t|^2dxd\tau\le C,
\end{align}
where and in what follows, $C$ denotes a generic constant which depends on $\mu$, $\lambda$, $a$, $f$, $\gamma$, $\trho$, $M_0$, $T^*$ and the initial data.
\end{lem}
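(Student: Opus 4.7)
The strategy is to test the momentum equation \eqref{NS}$_2$ against $u_t$ in $L^2(\R^3)$, so that the dissipation $\intox\rho|u_t|^2\,dx$ appears naturally on the left and the viscous terms produce $\tfrac{1}{2}\tfrac{d}{dt}(\mu\|\nabla u\|_{L^2}^2 + \lambda\|\divv u\|_{L^2}^2)$. I would first rewrite \eqref{NS}$_2$ in the form $\rho\dot u = \mu\Delta u + \lambda\nabla\divv u - \nabla(P-\tilde P) + (\rho-\trho)f$, using the steady-state identity $\nabla\tilde P = \trho f$ to combine $\nabla P$ and $\rho f$. Dotting with $u_t$ and integrating by parts yields
\begin{align*}
\tfrac{1}{2}\tfrac{d}{dt}\intox(\mu|\nabla u|^2 + \lambda(\divv u)^2)\,dx + \intox\rho|u_t|^2\,dx &= -\intox\rho u\cdot\nabla u\cdot u_t\,dx\\
&\quad + \intox(P-\tilde P)\divv u_t\,dx + \intox(\rho-\trho)f\cdot u_t\,dx.
\end{align*}

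The pressure term $\int(P-\tilde P)\divv u_t\,dx$ would be rewritten as a total time derivative plus a remainder using $P_t = -\divv(Pu)-(\gamma-1)P\divv u$ from \eqref{NS}$_1$; the effective-viscous-flux identity $(\mu+\lambda)\divv u = \trho F + (P-\tilde P)$ then lets the non-conservative remainder be dominated by $\|\nabla u\|_{L^2}^2$, $\|F\|_{L^2}^2$ and lower-order quantities, all controlled via $\rho\le M_0$ and \eqref{L^2 estimate blow up}. The forcing term is estimated by a time integration by parts $\int(\rho-\trho)f\cdot u_t\,dx = \tfrac{d}{dt}\int(\rho-\trho)f\cdot u\,dx + \int\divv(\rho u)\,f\cdot u\,dx$, the last integral being bounded using $f=\nabla\psi$, $\psi\in H^2\hookrightarrow L^\infty$, $\rho\le M_0$ and Sobolev/H\"older, producing only $\|\nabla u\|_{L^2}^2$ and constants on the right.

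The main obstacle is the convective term $I:=-\int\rho u\cdot\nabla u\cdot u_t\,dx$. H\"older with exponents $(2,r,2r/(r-2))$ gives $|I|\le\|\rho^{1/2}u_t\|_{L^2}\|\rho^{1/2}u\|_{L^r}\|\nabla u\|_{L^{2r/(r-2)}}$, and Young absorbs $\|\rho^{1/2}u_t\|_{L^2}^2$ into $\tfrac{1}{4}\int\rho|u_t|^2$, leaving $C\|\rho^{1/2}u\|_{L^r}^2\|\nabla u\|_{L^{2r/(r-2)}}^2$. Interpolation via \eqref{GN1} yields $\|\nabla u\|_{L^{2r/(r-2)}}^2\le C\|\nabla u\|_{L^2}^{2-6/r}\|\nabla u\|_{L^6}^{6/r}$, and a second Young step with dual exponents $r/3$ and $r/(r-3)$ — here the algebraic identities $(2-6/r)\cdot r/(r-3)=2$ and $s=2r/(r-3)$ from the Serrin endpoint $2/s+3/r=1$ in \eqref{conditions on r s} are essential — produces
\[
|I|\le \tfrac{1}{4}\intox\rho|u_t|^2\,dx + \epsilon\|\nabla u\|_{L^6}^2 + C\|\rho^{1/2}u\|_{L^r}^s\|\nabla u\|_{L^2}^2.
\]
The remaining $\|\nabla u\|_{L^6}^2$ is controlled by \eqref{bound on u in terms of F and omega} with $r_2=6$, Sobolev embedding, and \eqref{bound on F and omega in terms of u} with $r_1=2$, which give $\|\nabla u\|_{L^6}\le C(\|\rho^{1/2}u_t\|_{L^2}+\|\rho^{1/2}u\cdot\nabla u\|_{L^2}+\|\rho-\trho\|_{L^2\cap L^6})$; one more H\"older-Young-Serrin split handles $\|\rho^{1/2}u\cdot\nabla u\|_{L^2}^2$, so that for $\epsilon$ small all $\rho|u_t|^2$ contributions can be absorbed into the left-hand side.

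Collecting everything, the resulting inequality has the form $\tfrac{d}{dt}E(t) + \tfrac{1}{4}\intox\rho|u_t|^2\,dx\le C + C\phi(t)E(t)$, where $E(t)$ is equivalent to $\|\nabla u(\cdot,t)\|_{L^2}^2$ modulo a total-derivative boundary term of lower order, and $\phi(t) = 1 + \|\rho^{1/2}u(\cdot,t)\|_{L^r}^s \in L^1(0,T^*)$ by \eqref{blow-up 1 not} and $s>3/2$. Gronwall's inequality then yields the uniform bound on $\|\nabla u(\cdot,\tau)\|_{L^2}^2$, and one further integration in time produces the bound on $\int_0^t\int_{\R^3}\rho|u_t|^2\,dx\,d\tau$ in \eqref{estimate on nabla u}. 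The delicate point throughout is the exact matching of the H\"older, Young and Gagliardo-Nirenberg exponents against the Serrin endpoint in \eqref{conditions on r s}; the case $r=\infty$ simplifies since $2r/(r-2)=2$ makes the interpolation unnecessary.
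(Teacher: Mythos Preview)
Your proposal is correct and follows essentially the same route as the paper: multiply \eqref{NS}$_2$ by $u_t$, put the pressure contribution into a total time derivative, split the convective term by H\"older with exponents $(2,r,2r/(r-2))$, interpolate and control the high-integrability piece through $F$ and $\omega$ via \eqref{bound on F and omega in terms of u}--\eqref{bound on u in terms of F and omega} so that the Serrin relation $s=2r/(r-3)$ produces the Gr\"onwall weight $\|\rho^{1/2}u\|_{L^r}^s\in L^1(0,T^*)$. The only notable difference is that the paper treats the force term more simply, bounding $\int\rho f\cdot u_t\,dx$ directly by $\tfrac12\int\rho|u_t|^2+C\int\rho|f|^2$ (using $f\in L^2$ and $\rho\le M_0$) rather than your time integration-by-parts on $(\rho-\trho)f\cdot u_t$; both work, but the direct estimate avoids the extra boundary term.
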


\begin{proof}
We multiply the momentum equation \eqref{NS}$_2$ by $u_t$ and integrate to obtain
\begin{align*}
&\frac{1}{2}\frac{d}{dt}\intox(\mu|\nabla u|^2+\lambda(\divv(u))^2)dx+\intox\rho|u_t|^2dx\notag\\
&=\intox P\divv(u)dx-\intox\rho u\cdot\nabla u\cdot u_tdx+\intox\rho f\cdot u_tdx.
\end{align*}
Using H\"{o}lder's inequality and Young's inequality, the term involving $f$ can be bounded by
\begin{align*}
\Big|\intox\rho f\cdot u_tdx\Big|&\le\Big(\intox\rho|u_t|^2dx\Big)^\frac{1}{2}\Big(\intox\rho|f|^2dx\Big)^\frac{1}{2}\\
&\le\frac{1}{2}\Big(\intox\rho|u_t|^2dx\Big)+C\Big(\intox\rho|f|^2dx\Big).
\end{align*}
Hence by following the steps given in \cite{HLX11}, we arrive at
\begin{align}\label{bound on nabla u 1}
\frac{1}{2}\frac{d}{dt}\intox(|\nabla u|^2+(\divv(u))^2 - P\divv(u))dx+\frac{1}{2}\intox\rho|u_t|^2dx\notag\\
\le C\|\nabla u\|^2_{L^2}+C\intox\rho|u\cdot\nabla u|^2dx+C.
\end{align}
To estimate the advection term on the right side of \eqref{bound on nabla u 1}, for $r$, $s$ satisfying \eqref{conditions on r s}, we use \eqref{GN1} and \eqref{bound on u in terms of F and omega} to obtain
\begin{align*}
\|\rho^\frac{1}{2} u\cdot\nabla u\|_{L^2}&\le C\|\rho u\|_{L^r}\|\nabla u\|_{L^\frac{2r}{r-2}}\\
&\le C\|\rho^\frac{1}{2}u\|_{L^r}(\|F\|^{1-\frac{3}{r}}_{L^2}\|\nabla F\|^\frac{3}{r}_{L^2}+\|\nabla\omega\|^{1-\frac{3}{r}}_{L^2}\|\nabla \omega\|^\frac{3}{r}_{L^2}+1).
\end{align*}
Using Young's inequality, for any $\es>0$ being small, there exists $C_{\es}>0$ such that
\begin{align*}
&\|\rho^\frac{1}{2}u\|_{L^r}(\|F\|^{1-\frac{3}{r}}_{L^2}\|\nabla F\|^\frac{3}{r}_{L^2}+\|\nabla\omega\|^{1-\frac{3}{r}}_{L^2}\|\nabla \omega\|^\frac{3}{r}_{L^2}+1)\\
&\le\varepsilon(\|\nabla F\|_{L^2}+\|\nabla\omega\|_{L^2}))+C_{\es}\|\rho^\frac{1}{2}u\|^\frac{s}{2}_{L^r}(\|F\|_{L^2}+\|\omega\|_{L^2}+1)+C_{\es},
\end{align*}
and hence by applying \eqref{bound on F and omega in terms of u}, we obtain
\begin{align}\label{bound on nabla u 2}
\|\rho^\frac{1}{2} u\cdot\nabla u\|_{L^2}\le C\varepsilon\|\rho u_t\|_{L^2}+C_{\es}\|\rho^\frac{1}{2}u\|^\frac{s}{2}_{L^r}(\|\nabla u\|_{L^2}+1)+C_{\es}.
\end{align}
Applying \eqref{bound on nabla u 2} on \eqref{bound on nabla u 1} and choosing $\es>0$ small enough,
\begin{align}\label{bound on nabla u 3}
&\frac{d}{dt}\intox(|\nabla u|^2+(\divv(u))^2 - P\divv(u))dx+\frac{1}{2}\intox\rho|u_t|^2dx\notag\\
&\le C(\|\rho^\frac{1}{2}u\|^\frac{s}{2}_{L^r}+1)(\|\nabla u\|_{L^2}+1)+C\le C(\|\nabla u\|_{L^2}+1),
\end{align}
where the last inequality follows by \eqref{blow-up 1 not}. Hence the estimate \eqref{estimate on nabla u} follows by using Gr\"{o}nwall's inequality on \eqref{bound on nabla u 3}.
\end{proof}
Next, we make use of Lemma~\ref{estimate on nabla u lem} to obtain some higher order estimates on $u$ which can be stated in the following lemma:
\begin{lem}\label{higher estimate on nabla u lem}
Assume that \eqref{blow-up 1 not} holds, then for all $t\in[0,T^*)$, we have
\begin{align}\label{higher estimate on nabla u}
\sup_{0\le \tau\le t}\rho\|\dot u(\cdot,\tau)\|^2_{L^2}+\Int\rho|\nabla\dot{u}|^2dxd\tau\le C.
\end{align}
\end{lem}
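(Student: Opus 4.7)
The plan is to run the standard Hoff energy argument for the material derivative $\dot u$, modified to account for the external force $f$. First I would apply the operator $\partial_t+\divv(u\,\cdot\,)$ to the $j$-th momentum equation in \eqref{NS}. Using the continuity equation, the left-hand side transforms into $\partial_t(\rho\dot u^j)+\divv(\rho u\dot u^j)$, while the right-hand side becomes a sum of pressure terms, viscous terms, and a new force contribution. For the latter, since $f=f(x)$ is time-independent, one checks directly that $[\partial_t+\divv(u\,\cdot\,)](\rho f^j)=\rho\, u\cdot\nabla f^j$. Multiplying the resulting equation by $\dot u^j$, summing over $j$, and integrating by parts converts the left side into $\tfrac{1}{2}\frac{d}{dt}\int_{\R^3}\rho|\dot u|^2\,dx$ and produces on the right side the good dissipation $-\mu\int|\nabla\dot u|^2\,dx-\lambda\int(\divv\dot u)^2\,dx$ together with commutator terms.

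Next I would estimate the bad terms. The commutators coming from the viscous part (after moving the operator past $\Delta$ and $\nabla\divv$) are cubic in $\nabla u$ and linear in $\nabla\dot u$, and can be absorbed into the dissipation by Young's inequality provided $\|\nabla u\|_{L^4}$ is controlled; this is where Lemma~\ref{estimate on nabla u lem} together with \eqref{bound on u in terms of F and omega} and \eqref{bound on F and omega in terms of u} enters, since they yield $\|\nabla u\|_{L^4}\le C(\|F\|_{L^4}+\|\omega\|_{L^4}+\|\rho-\trho\|_{L^4})$ and the $F$, $\omega$ parts are interpolated between $L^2$ and $\|\rho^{1/2}u_t\|_{L^2}+\|\rho^{1/2}u\cdot\nabla u\|_{L^2}$. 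The pressure-derived terms rely on the transport identity $P_t+u\cdot\nabla P+\gamma P\divv u=0$ (a consequence of \eqref{NS}$_1$ and \eqref{condition on P}), which after integration by parts produces terms of the form $\int P\,\nabla u\!:\!\nabla\dot u$ and $\int P\,\divv u\,\divv\dot u$, again absorbable into the dissipation using $\|\rho\|_{L^\infty}\le M_0$.

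The new force term is
\[
\Bigl|\int_{\R^3}\rho(u\cdot\nabla f)\cdot\dot u\,dx\Bigr|\le \|\rho\|_{L^\infty}^{1/2}\|u\|_{L^6}\|\nabla f\|_{L^3}\|\rho^{1/2}\dot u\|_{L^2},
\]
which is finite because $f=\nabla\psi$ with $\psi\in H^2$ gives $\nabla f\in L^2\cap L^6\hookrightarrow L^3$, $\|u\|_{L^6}\lesssim\|\nabla u\|_{L^2}$ is controlled by Lemma~\ref{estimate on nabla u lem}, and $\|\rho\|_{L^\infty}$ is controlled by \eqref{blow-up 1 not}. A factor of $\varepsilon\|\rho^{1/2}\dot u\|_{L^2}^2$ is absorbed and the remainder enters the Gr\"onwall loop.

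Putting everything together yields a differential inequality of the form
\[
\frac{d}{dt}\|\rho^{1/2}\dot u\|_{L^2}^2+\int_{\R^3}|\nabla\dot u|^2\,dx\le C\bigl(1+\|\rho^{1/2}\dot u\|_{L^2}^2\bigr)\bigl(1+\|\nabla u\|_{L^2}^2+\|\rho^{1/2}u_t\|_{L^2}^2\bigr),
\]
and since the second factor is $L^1_t$ by \eqref{estimate on nabla u}, Gr\"onwall's inequality closes the bound. The initial value $\|\rho_0^{1/2}\dot u_0\|_{L^2}=\|g\|_{L^2}$ is finite by the compatibility condition \eqref{compatibility condition}. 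The main obstacle I anticipate is the careful handling of the pressure-derived commutators, where the double-gradient structure forces one to rewrite $\divv u$ in terms of the effective viscous flux $F$ via \eqref{def of F and omega} to extract enough regularity; the external-force contribution, by contrast, is mild because of the $H^2$-regularity of the potential $\psi$.
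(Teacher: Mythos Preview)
Your proposal is correct and follows the same approach the paper takes: the paper simply cites \cite{HLX11} for the Hoff-type material-derivative energy estimate you spell out in detail, arriving at a right-hand side $C\int_0^t\|\nabla u\|_{L^4}^4\,d\tau+C$, and then bounds $\int_0^t\|\nabla u\|_{L^4}^4\,d\tau$ through $F$ and $\omega$ exactly as you suggest. The only minor difference is in the closing step: the paper absorbs the resulting $\int_0^t\|\nabla\dot u\|_{L^2}^{3/2}\,d\tau$ directly into the dissipation via Young's inequality rather than setting up a linear Gr\"onwall with $L^1_t$ coefficient---your stated differential inequality is slightly optimistic since $\|\nabla u\|_{L^4}^4$ scales like $\|\rho^{1/2}\dot u\|_{L^2}^3$ and does not quite factor with an $L^1_t$ coefficient, but either closing device works with only cosmetic changes.
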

\begin{proof}
Following the steps given in \cite{HLX11}, we readily have
\begin{align*}
\sup_{0\le \tau\le t}\rho\|\dot u(\cdot,\tau)\|^2_{L^2}+\Int\rho|\nabla\dot{u}|^2dxd\tau\le C\int_0^t\|\nabla u(\cdot,\tau)\|^4_{L^4}d\tau+C\Big(\intox\rho|f|^2\Big)+C.
\end{align*}
To estimate the term $\dis\int_0^t\|\nabla u\|^4_{L^4}d\tau$, we apply \eqref{bound on F and omega in terms of u} and \eqref{bound on u in terms of F and omega} to get
\begin{align*}
\int_0^t\|\nabla u\|^4_{L^4}d\tau&\le C\int_0^t(\|F\|^4_{L^4}+\|\omega\|^4_{L^4})d\tau+C\\
&\le C\int_0^t(\|F\|^\frac{5}{2}_{L^2}\|\nabla F\|^\frac{3}{2}_{L^2}+\|\omega\|^\frac{5}{2}_{L^2}\|\nabla \omega\|^\frac{3}{2}_{L^2})d\tau+C\\
&\le C\int_0^t\|\nabla\dot{u}\|^\frac{3}{2}_{L^2}d\tau+C,
\end{align*}
hence by using Young's inequality and Gr\"{o}nwall's inequality, the estimate \eqref{higher estimate on nabla u} holds for all $T\in[0,T^*)$.
\end{proof}

\begin{rem}\label{rem:L4 bound on u by F}
As pointed out in section~\ref{intro section}, it is important to use the effective viscous flux $F$ and the vorticity $\omega$ for estimating the term $\dis\int_0^t\|\nabla u\|^4_{L^4}d\tau$, since there is no available {\it a priori} bound on $\|\nabla u\|_{H^1}$ and hence, we cannot merely apply the Sobolev embedding $H^1\hookrightarrow L^4$ in Lemma~\ref{higher estimate on nabla u lem}.
\end{rem}

We give the following estimate on the density gradient $\nabla \rho$ and the $H^1$ norm of $\nabla u$:
\begin{lem}\label{higher estimate on rho and u lem}
Assume that \eqref{blow-up 1 not} holds, then for all $t\in[0,T^*)$, we have
\begin{align}\label{higher estimate on rho and u}
\sup_{0\le \tau\le t}(\|\rho\|_{H^1\cap W^{1,q'}}+\|\nabla u\|_{H^1})(\cdot,\tau)\le C,
\end{align}
for all $q'\in(3,6]$.
\end{lem}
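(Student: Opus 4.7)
The plan is to close a Gr\"onwall estimate for $\|\nabla\rho\|_{L^p}$ at $p=2$ and $p=q'$, and then recover $\|\nabla u\|_{H^1}$ by elliptic regularity applied to the momentum equation. First I would exploit the steady-state identity $\nabla\tilde P=\trho f$ to rewrite \eqref{NS}$_2$ as the Lam\'e system
\[
\mu\Delta u+\lambda\nabla\divv u=\rho\dot u+\nabla(P-\tilde P)-(\rho-\trho)f,
\]
so that standard elliptic $L^p$ theory together with Lemma~\ref{existence for steady state lem} yields
\[
\|\nabla^2 u\|_{L^p}\le C\big(\|\rho\dot u\|_{L^p}+\|\nabla\rho\|_{L^p}+\|\rho-\trho\|_{L^p}+1\big),\quad p\in(1,\infty).
\]
Since $\rho\in L^\infty$ by \eqref{blow-up 1 not}, interpolation between $\|\rho^{1/2}\dot u\|_{L^2}$ and $\|\dot u\|_{L^6}\le C\|\nabla\dot u\|_{L^2}$ controls $\|\rho\dot u\|_{L^{q'}}$ by $C(1+\|\nabla\dot u\|_{L^2})$, which lies in $L^2_t$ thanks to Lemma~\ref{higher estimate on nabla u lem}; meanwhile $\|\rho-\trho\|_{L^p}$ is finite for $p\in[2,\infty]$ by interpolating the energy bound \eqref{L^2 estimate blow up} against the uniform bound on $\rho$. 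Thus the remaining task is to control $\|\nabla\rho\|_{L^2\cap L^{q'}}$.

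Next I would differentiate \eqref{NS}$_1$ to obtain the transport identity
\[
(\nabla\rho)_t+u\cdot\nabla(\nabla\rho)+(\nabla u)^{\!\top}\nabla\rho+(\divv u)\nabla\rho+\rho\,\nabla\divv u=0,
\]
and pair it with $|\nabla\rho|^{p-2}\nabla\rho$ for $p\in\{2,q'\}$ to produce
\[
\frac{d}{dt}\|\nabla\rho\|_{L^p}\le C\|\nabla u\|_{L^\infty}\|\nabla\rho\|_{L^p}+C\|\nabla\divv u\|_{L^p}.
\]
The divergence term is handled by writing $(\mu+\lambda)\nabla\divv u=\trho\,\nabla F+F\,\nabla\trho+\nabla(P-\tilde P)$, which combined with \eqref{bound on F and omega in terms of u} and the $W^{2,6}$ regularity of $\trho$ from Lemma~\ref{existence for steady state lem} gives
\[
\|\nabla\divv u\|_{L^p}\le C\big(\|\rho^{1/2}\dot u\|_{L^p}+\|\nabla\rho\|_{L^p}+1\big).
\]

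The main obstacle will be the $\|\nabla u\|_{L^\infty}$ coefficient, which I would tame using the logarithmic inequality \eqref{log estimate on nabla u} with $q=q'$. From \eqref{def of F and omega} and the $L^\infty$ bound on $P-\tilde P$, the quantities $\|\divv u\|_{L^\infty}$ and $\|\omega\|_{L^\infty}$ reduce to $L^\infty$ estimates for $F$ and $\omega$; applying the Gagliardo-Nirenberg interpolation \eqref{GN2}, the bound \eqref{bound on F and omega in terms of u} at $r_1=2,6$, and Lemmas~\ref{estimate on nabla u lem}-\ref{higher estimate on nabla u lem}, I expect to obtain
\[
\|\divv u\|_{L^\infty}+\|\omega\|_{L^\infty}\le C\big(1+\|\nabla\dot u\|_{L^2}^{1/2}\big),
\]
while the logarithmic argument is controlled by $C\ln(e+\|\nabla\rho\|_{L^{q'}}+\|\nabla\dot u\|_{L^2})$ via the elliptic estimate of the first step. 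Setting $\Phi(t):=e+\|\nabla\rho\|_{L^2}+\|\nabla\rho\|_{L^{q'}}$ and combining everything produces a logarithmic Gr\"onwall inequality of the form
\[
\frac{d}{dt}\ln\Phi(t)\le C\big(1+\|\nabla\dot u\|_{L^2}^2\big)\ln\Phi(t)+C,
\]
whose coefficient lies in $L^1_t$ by Lemma~\ref{higher estimate on nabla u lem}. Integrating yields the uniform bound on $\|\nabla\rho\|_{L^2\cap L^{q'}}$, and substituting back into the elliptic estimate of the first paragraph then completes the $\|\nabla u\|_{H^1}$ control. The delicate point is ensuring the power of $\|\nabla\dot u\|_{L^2}$ multiplying the log remains sub-quadratic, which is exactly what the $L^\infty$-interpolation of $F$ and $\omega$ provides.
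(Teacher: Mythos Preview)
Your proposal is correct and follows essentially the same route as the paper: derive a transport-type differential inequality for $\|\nabla\rho\|_{L^p}$ (the paper works with $\|\nabla(\rho-\trho)\|_{L^p}$, a cosmetic difference), feed in the elliptic $W^{1,q'}$ estimate for $u$ coming from the Lam\'e form of the momentum equation, control $\|\nabla u\|_{L^\infty}$ via the Beale--Kato--Majda logarithmic inequality together with the $L^\infty$ bounds on $F,\omega$ from Lemma~\ref{higher estimate on nabla u lem}, and close by a logarithmic Gr\"onwall argument. The organization and the key inputs (\eqref{bound on F and omega in terms of u}, \eqref{log estimate on nabla u}, and the $L^2_t$ integrability of $\|\nabla\dot u\|_{L^2}$ from Lemma~\ref{higher estimate on nabla u lem}) match the paper's proof.
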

\begin{proof}
For any $p\in[2,6]$, we have
\begin{align*}
&\frac{d}{dt}(|\nabla\rho|^p)+\divv(|\nabla(\rho-\trho)|^pu)+(p-1)|\nabla(\rho-\trho)|^p\divv(u)\\
&+p|\nabla(\rho-\trho)|^{p-2}\nabla(\rho-\trho)\nabla u\nabla(\rho-\trho)+p(\rho-\trho)|\nabla(\rho-\trho)|^{p-2}\nabla(\rho-\trho)\nabla\divv(u)\\
&=-p\nabla\divv(\trho u)\cdot\nabla(\rho-\trho)|\nabla(\rho-\trho)|^{p-2}.
\end{align*}
We integrate the above equation over $\R^3$ and use \eqref{GN1}, \eqref{bound on F and omega in terms of u} and \eqref{higher estimate on nabla u} to obtain
\begin{align}\label{estimate on dt nabla rho}
&\frac{d}{dt}\|\nabla(\rho-\trho)\|_{L^p}\notag\\
&\le C(1+\|\nabla u\|_{L^\infty}+\|\nabla(\trho F)\|_{L^p}+\|u\|_{L^p}+\|\nabla u\|_{L^p})\|\nabla(\rho-\trho)\|_{L^p}\notag\\
&\le C(1+\|\nabla u\|_{L^\infty}+\|\nabla\dot{u}\|_{L^2})\|\nabla(\rho-\trho)\|_{L^p}.
\end{align}
On the other hand, upon rearranging terms from the momentum equation \eqref{NS}$_2$, we have
\begin{align}\label{elliptic system for u}
\mu\Delta u+\lambda\nabla\divv(u)=\rho\dot{u}+\nabla(P(\rho)-P(\trho))+\nabla P(\trho)(\trho-\rho)\trho^{-1}.
\end{align}
Hence for each $q'\in(3,6]$, by applying $L^{q'}$-estimate on $u$ in \eqref{elliptic system for u}, we have
\begin{align}\label{q' estimate on nabla u}
\|\nabla u\|_{W^{1,q'}}\le C(1+\|\nabla\dot{u}\|_{L^2}+\|\nabla(\rho-\trho)\|_{L^{q'}}).
\end{align}
Using the Sobolev inequality \eqref{GN2}, together with the estimates \eqref{log estimate on nabla u} and \eqref{q' estimate on nabla u}, we have
\begin{align}\label{log estimate on nabla u with rho}
\|\nabla u\|_{L^\infty}\le C+&C(\|\divv(u)\|_{L^\infty}+\|\omega\|_{L^\infty})\ln(e+\|\nabla\dot{u}\|_{L^2})\notag\\
&+C(\|\divv(u)\|_{L^\infty}+\|\omega\|_{L^\infty})\ln(e+\|\nabla(\rho-\trho)\|_{L^{q'}}).
\end{align}
To estimate the time integral of $(\|\divv(u)\|_{L^\infty}^2+\|\omega\|_{L^\infty}^2)$, using \eqref{GN2}, \eqref{bound on F and omega in terms of u} and \eqref{higher estimate on nabla u}, we readily have
\begin{align}\label{bound on time integral on div(u) and omega}
\int_0^t(\|\divv(u)\|_{L^\infty}^2+\|\omega\|_{L^\infty}^2)(\cdot,\tau)d\tau&\le C\int_0^t(\|F\|^2_{L^\infty}+\|\omega\|^2_{L^\infty})(\cdot,\tau)d\tau+C\notag\\
&\le C\int_0^t\|\nabla\dot{u}(\cdot,\tau)\|^2_{L^2}d\tau+C\le C.
\end{align}
Hence by applying \eqref{log estimate on nabla u with rho} on \eqref{estimate on dt nabla rho} with $p=q'$, using Gr\"{o}nwall's inequality with the bounds \eqref{higher estimate on nabla u} and \eqref{bound on time integral on div(u) and omega}, we obtain
\begin{align}\label{sup bound on Lq rho}
\sup_{0\le \tau\le t}\|\nabla(\rho-\trho)(\cdot,\tau)\|_{L^{q'}}\le C.
\end{align}
By combining \eqref{log estimate on nabla u with rho} with \eqref{sup bound on Lq rho} and \eqref{bound on time integral on div(u) and omega}, it further gives
\begin{align}\label{bound on time integral of nabla u infty}
\int_0^t\|\nabla u(\cdot,\tau)\|_{L^\infty}d\tau\le C.
\end{align}
Imtegraing \eqref{estimate on dt nabla rho} with $p=2$ over $t$ and together with \eqref{higher estimate on nabla u} and \eqref{bound on time integral of nabla u infty}, it follows that
\begin{align}\label{sup bound on L2 rho}
\sup_{0\le \tau\le t}\|\nabla(\rho-\trho)(\cdot,\tau)\|_{L^2}\le C.
\end{align}
which gives the bound on $\rho-\trho$ as claimed in \eqref{higher estimate on rho and u}. The bound on $u$ as appeared in \eqref{higher estimate on rho and u} then follows from $L^2$-estimate on \eqref{elliptic system for u} with the bounds \eqref{estimate on nabla u} and \eqref{sup bound on L2 rho}, and we finish the proof for \eqref{higher estimate on rho and u}.
\end{proof}
\begin{proof}[Proof of Theorem~\ref{Main thm for gamma>1}]
The proof then follows from the same argument given in \cite{HLX11}, namely by choosing the function $(\rho,u)(x,T^*)$ to be the limit of $(\rho,u)(x,t)$ as $t\to T^*$, one can show that $(\rho,u)(x,T^*)$ satisfies the compatibility condition \eqref{compatibility condition} as well. Therefore, if we take $(\rho,u)(x,T^*)$ to be the new initial data for the system \eqref{NS}, then Proposition~\ref{Local-in-time existence prop} applies and shows that the local strong solution can be extended beyond the maximal time $T^*$.  
\end{proof}

\section{Proof of Theorem~\ref{Main thm for gamma=1}}\label{proof of main 2 section}

In this section, we prove Theorem~\ref{Main thm for gamma=1} using a different approach compared with the proof of Theorem~\ref{Main thm for gamma>1}. We let $(\rho,u)$ be a strong solution to the system \eqref{NS} for the isothermal case as described in Theorem~\ref{Main thm for gamma=1}, and for the sake of contradiction, suppose that \eqref{blow-up 2} does not hold. Then there exists a constant $\tilde{M}_0>0$ such that
\begin{align}\label{blow-up 2 not}
\lim_{T\to T^*}\sup_{\R^3\times[0,T]}|\rho|\le \tilde{M}_0.
\end{align}
Furthermore, together with the bound \eqref{bound on time integral of nabla u infty} on $\|\nabla u\|_{L^\infty}$ and the assumption \eqref{further assumption on initial density} on $\rho_0$, we have
\begin{align}\label{lower bound on rho}
\inf_{\R^3\times[0,T^*)}\rho\ge \tilde{M}_1,
\end{align}
where $\tilde{M}_1>0$ is a constant which depends on $\mu$, $\lambda$, $a$, $f$, $\trho$, $\tilde{M}_0$, $T^*$ and the initial data.

To facilitate our discussion, we introduce the following auxiliary functionals:
\begin{align}
\Phi_1(t)&=\sup_{0\le \tau\le t}\|\nabla u(\cdot,\tau)\|^2_{L^2}+\int_{0}^{t}\|\rho^\frac{1}{2}\dot{u}(\cdot,\tau)\|_{L^2}^2d\tau,\label{def of Phi 1}\\
\Phi_2(t)&=\sup_{0\le \tau\le t}\|\rho^\frac{1}{2}\dot{u}(\cdot,\tau)\|^2_{L^2}+\int_{0}^{t}\|\nabla\dot{u}(\cdot,\tau)\|^2_{L^2}d\tau,\label{def of Phi 2}\\
\Phi_3(t)&=\int_0^t\int_{\R^3}|\nabla u|^4dxd\tau.\label{def of Phi 3}
\end{align}

We recall the following lemma which gives estimates on the solutions of the Lam\'{e} operator $\mu\Delta+\lambda\nabla\divv$. Details can be found in \cite[pp. 39]{swz11}.
\begin{lem}\label{estimates on Lame operator}
Consider the following equation:
\begin{equation}\label{eqn for Lame operator}
\mu\Delta v+\lambda\nabla\divv(v)=J,
\end{equation}
where $v=(v^1,v^2,v^3)(x)$, $J=(J^1,J^2,J^3)(x)$ with $x\in\R^3$ and $\mu$, $\lambda>0$. Then for $r\in(1,\infty)$, we have:
\begin{itemize} 
\item if $J\in W^{2,r}(\R^3)$, then $||\Delta v||_{L^r}\le C||J||_{L^r}$;
\item if $J=\nabla\varphi$ with $\varphi\in W^{2,r}(\R^3)$, then $||\nabla v||_{L^r}\le C||\varphi||_{L^r}$;
\item if $J=\nabla\divv(\varphi)$ with $\varphi\in W^{2,r}(\R^3)$, then $||v||_{L^r}\le C||\varphi||_{L^r}$.
\end{itemize}
Here $C$ is a positive constant which depends on $\mu$, $\lambda$ and $r$.
\end{lem}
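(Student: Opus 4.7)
The plan is to analyze the Lam\'e operator $\mathcal{L}v:=\mu\Delta v+\lambda\nabla\divv(v)$ via the Fourier transform, reducing each of the three claims to the $L^{r}$-boundedness of a specific Fourier multiplier on $\R^3$. Each such multiplier will either be constant or will factor through compositions of Riesz transforms, so its boundedness on $L^{r}$ for every $r\in(1,\infty)$ will follow from classical Calder\'on--Zygmund singular integral theory.

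First I would take the Fourier transform of \eqref{eqn for Lame operator} to obtain $-M(\xi)\hat v(\xi)=\hat J(\xi)$, where $M(\xi):=\mu|\xi|^{2}I+\lambda\,\xi\xi^{T}$. For $\xi\ne 0$ this matrix is symmetric positive definite, with eigenvalue $(\mu+\lambda)|\xi|^{2}$ along the direction $\xi$ and eigenvalue $\mu|\xi|^{2}$ on the orthogonal complement of $\xi$, so its inverse admits the explicit decomposition
\[
M(\xi)^{-1}=\frac{1}{\mu|\xi|^{2}}\Big(I-\frac{\xi\xi^{T}}{|\xi|^{2}}\Big)+\frac{1}{(\mu+\lambda)|\xi|^{2}}\cdot\frac{\xi\xi^{T}}{|\xi|^{2}}.
\]
This yields the representation $\hat v(\xi)=-M(\xi)^{-1}\hat J(\xi)$, from which all three estimates can be read off.

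For claim (i), multiplying by $-|\xi|^{2}$ produces a formula for $\widehat{\Delta v}$ whose entries are linear combinations of the $\hat J_{k}$ with coefficients that are bounded multiples of $\delta_{jk}$ and of $\xi_{j}\xi_{k}/|\xi|^{2}$; the latter is the symbol of a double Riesz transform and hence defines an $L^{r}$-bounded operator. For claim (ii), substituting $\hat J=i\xi\hat\varphi$ produces a cancellation in the orthogonal-complement part of $M^{-1}$, leaving $\hat v(\xi)=-i\xi\hat\varphi(\xi)/\bigl((\mu+\lambda)|\xi|^{2}\bigr)$; then $\widehat{\partial_{k}v_{j}}=\xi_{j}\xi_{k}(\mu+\lambda)^{-1}|\xi|^{-2}\hat\varphi$ is again a Riesz-type multiplier, yielding $\|\nabla v\|_{L^{r}}\le C\|\varphi\|_{L^{r}}$. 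For claim (iii), $\hat J_{j}=-\xi_{j}(\xi\cdot\hat\varphi)$ leads after the same simplification to $\hat v_{j}(\xi)=\xi_{j}\xi_{k}(\mu+\lambda)^{-1}|\xi|^{-2}\hat\varphi_{k}$, and the identical argument delivers $\|v\|_{L^{r}}\le C\|\varphi\|_{L^{r}}$.

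The main obstacle is not the symbolic algebra but its rigorous justification at low regularity: the Fourier manipulations should first be carried out for $J$ in the Schwartz class (or after mollification), and then the resulting a priori bounds extended by density to the stated function classes. The hypothesis $\varphi\in W^{2,r}(\R^{3})$ in (ii) and (iii) is somewhat stronger than strictly needed for the inequalities themselves, but it is convenient in the present paper because it guarantees that $J$ is regular enough for $v$ to be a genuine strong solution of \eqref{eqn for Lame operator} to which the Fourier representation applies unambiguously; an alternative route avoiding Fourier analysis would instead split $v$ via its Helmholtz decomposition and solve a Poisson equation for the potential part together with a reduced Laplace equation for the divergence-free part, recovering the three bounds through scalar elliptic theory.
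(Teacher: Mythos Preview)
Your argument is correct. The paper does not supply its own proof of this lemma; it simply records the statement and cites \cite[pp.~39]{swz11} for the details. Your Fourier-multiplier computation is the standard route to these estimates: the explicit diagonalisation of the symbol $M(\xi)=\mu|\xi|^{2}I+\lambda\,\xi\xi^{T}$ is accurate, the cancellation $(I-\xi\xi^{T}/|\xi|^{2})\xi=0$ you exploit in (ii) and (iii) is exactly what reduces those cases to pure double Riesz transforms, and the resulting degree-zero homogeneous symbols are $L^{r}$-bounded for $1<r<\infty$ by the Mikhlin--H\"ormander theorem. The density/mollification step you flag is the right way to pass from Schwartz data to the stated regularity classes, and your closing remark about the Helmholtz decomposition describes precisely the alternative argument one sometimes sees in the literature.
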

One of the key for the proof of Theorem~\ref{Main thm for gamma=1} is to estimate the $L^4$ norm of $\rho^\frac{1}{4} u$ and the results can be summarised in the following lemma:
\begin{lem}
Assume that \eqref{blow-up 2 not} holds, then for $t\in[0,T^*)$, we have
\begin{align}\label{L4 estimate on u}
\sup_{0\le \tau\le t}\intox\rho|u|^4dx\le \tilde C,
\end{align}
where and in what follows, $\tilde C$ denotes a generic constant which depends on $\mu$, $\lambda$, $a$, $f$, $\trho$, $\tilde{M}_0$, $T^*$, $\tilde{M}_1$ and the initial data.
\end{lem}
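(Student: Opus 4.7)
The plan is a direct higher-moment energy estimate at the $L^4$ level for $\rho^{1/4}u$. I would multiply the momentum equation \eqref{NS}$_2$ by $|u|^2 u^j$, sum over $j$, integrate over $\R^3$, and use the continuity equation to collapse the material-derivative part into $\tfrac{1}{4}\tfrac{d}{dt}\intox\rho|u|^4\,dx$. For the non-viscous forcing, the steady-state identity $\rho f=\rho\trho^{-1}\nabla\tilde P$ lets me combine pressure and external force as
\[
\intox(P-\tilde P)\divv(|u|^2 u)\,dx+\intox\trho^{-1}(\rho-\trho)\nabla\tilde P\cdot u|u|^2\,dx.
\]
Integration by parts on the viscous terms produces the positive dissipation $\mu\intox|u|^2|\nabla u|^2\,dx+\tfrac{\mu}{2}\intox|\nabla|u|^2|^2\,dx+\lambda\intox|u|^2(\divv u)^2\,dx$ together with an indefinite cross term $\lambda\intox\divv u\,(u\cdot\nabla|u|^2)\,dx$, which Young's inequality---exploiting the hypothesis \eqref{assumption on viscosity}---absorbs into the dissipation.

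For the pressure contribution, the isothermal law gives $\|P-\tilde P\|_{L^\infty}=a\|\rho-\trho\|_{L^\infty}\le C$ under \eqref{blow-up 2 not}, while \eqref{lower bound on rho} together with \eqref{L^2 estimate blow up} yields the elementary bound $\|u\|_{L^2}\le\tilde M_1^{-1/2}\|\rho^{1/2}u\|_{L^2}\le C$. Splitting $\divv(|u|^2 u)=u\cdot\nabla|u|^2+|u|^2\divv u$ and applying Cauchy-Schwarz then bounds the pressure term by $\varepsilon\bigl(\|\nabla|u|^2\|_{L^2}^2+\||u|\nabla u\|_{L^2}^2\bigr)+C_\varepsilon$, which is absorbable.

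The main obstacle is the new forcing term $\intox\trho^{-1}(\rho-\trho)\nabla\tilde P\cdot u|u|^2\,dx$---precisely the distinctive extra contribution highlighted in section~\ref{intro section}. Using $\nabla\tilde P\in L^2$ (via Lemma~\ref{existence for steady state lem}, since $\trho-\rho_\infty\in H^2$) and $\|\trho^{-1}(\rho-\trho)\|_{L^\infty}\le C$, a Cauchy-Schwarz argument gives $\le C\||u|^3\|_{L^2}=C\|u\|_{L^6}^3$. The key interpolation is the Gagliardo--Nirenberg inequality applied to $|u|^2$,
\[
\|u\|_{L^6}^2=\||u|^2\|_{L^3}\le C\||u|^2\|_{L^2}^{1/2}\|\nabla|u|^2\|_{L^2}^{1/2}=C\|u\|_{L^4}\|\nabla|u|^2\|_{L^2}^{1/2},
\]
together with the crucial lower-density estimate $\|u\|_{L^4}^4\le\tilde M_1^{-1}\intox\rho|u|^4\,dx$ coming from \eqref{lower bound on rho}. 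Raising the interpolation to the $3/2$ power and applying Young's inequality with conjugate exponents $(8/3,8/5)$ produces
\[
\|u\|_{L^6}^3\le\varepsilon\|\nabla|u|^2\|_{L^2}^2+C_\varepsilon\Bigl(\intox\rho|u|^4\,dx\Bigr)^{3/5}.
\]

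After absorbing all $\varepsilon$-terms into the dissipation, the resulting differential inequality $\tfrac{d}{dt}\intox\rho|u|^4\,dx\le C\bigl(\intox\rho|u|^4\,dx\bigr)^{3/5}+C$ is subcritical (since $3/5<1$), so a Gr\"{o}nwall-type or elementary Bernoulli comparison closes the estimate on $[0,T^*)$ and yields \eqref{L4 estimate on u}.
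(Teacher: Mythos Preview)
Your proposal is correct and follows exactly the standard $L^4$-moment method (multiply \eqref{NS}$_2$ by $|u|^2u$ and absorb the indefinite viscous cross term via the structural condition \eqref{assumption on viscosity}) that the paper itself defers to \cite{HLX11}, \cite{hoff95} and \cite{HL09} without supplying any details. Your explicit treatment of the additional potential-force contribution $\intox\trho^{-1}(\rho-\trho)\nabla\tilde P\cdot u|u|^2\,dx$ via Gagliardo--Nirenberg interpolation on $|u|^2$ together with the lower density bound \eqref{lower bound on rho} is not in those cited references (which have no external force), but it is sound and fully consistent with the lemma's stated dependence of $\tilde C$ on $\tilde M_1$.
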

\begin{proof}
It can be proved by the method given in \cite{HLX11} (also refer to \cite{hoff95} and \cite{HL09} for more details) and we omit here for the sake of brevity. We point out that the condition \eqref{assumption on viscosity} is required for obtaining \eqref{L4 estimate on u}.
\end{proof}
We begin to estimate the functionals $\Phi_1$, $\Phi_2$ and $\Phi_3$. The following lemma gives an estimate on $\Phi_1$ in terms of $\Phi_3$:
\begin{lem}\label{bound on Phi 1 lemma}
Assume that \eqref{blow-up 2 not} holds. For any $0\le t< T^*$,
\begin{equation}\label{bound on Phi 1}
\Phi_1(t)\le \tilde C[1+\Phi_3(t)].
\end{equation}
\end{lem}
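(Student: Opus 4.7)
The natural line of attack is the first-order energy estimate obtained by testing \eqref{NS}$_2$ against $u_t$; the advection term will be the only source of $\|\nabla u\|_{L^4}^4$. Using \eqref{steady state} I first rewrite \eqref{NS}$_2$ in the ``balanced'' form
\begin{equation*}
\rho\dot u+\nabla(P-\tilde P)=\mu\Delta u+\lambda\nabla\divv u+(\rho-\trho)\nabla\psi,
\end{equation*}
which is the formulation highlighted in the introduction, in which the remaining force perturbation $(\rho-\trho)\nabla\psi$ is $O(1)$ under \eqref{blow-up 2 not} and $\psi\in H^2$.

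Testing against $u_t$ over $\R^3$ and integrating by parts on the viscous terms produces the core identity
\begin{equation*}
\tfrac12\tfrac{d}{dt}\int_{\R^3}\bigl(\mu|\nabla u|^2+\lambda(\divv u)^2\bigr)dx+\int_{\R^3}\rho|u_t|^2 dx=I_P+I_A+I_F,
\end{equation*}
with $I_P=\int(P-\tilde P)\divv u_t\,dx$, $I_A=-\int\rho u\cdot\nabla u\cdot u_t\,dx$, and $I_F=\int(\rho-\trho)\nabla\psi\cdot u_t\,dx$. For $I_P$ I integrate by parts in time; the isothermal identity $\partial_t(P-\tilde P)=-\divv(Pu)$ (from $\gamma=1$ and continuity) leaves a remainder $\int Pu\cdot\nabla\divv u\,dx$, which I treat by passing through the effective viscous flux: writing $(\mu+\lambda)\nabla\divv u=\nabla(\trho F)+\nabla(P-\tilde P)$ and using \eqref{bound on F and omega in terms of u} to control $\|\nabla F\|_{L^2}$ by $\|\rho^{1/2}\dot u\|_{L^2}+\|\rho-\trho\|_{L^2}$; the piece $\int Pu\cdot\nabla(P-\tilde P)\,dx$ is then reduced by IBP in space to quantities already controlled through $\|P-\tilde P\|_{L^2}$, $\|u\|_{L^2}$, and $\|\divv u\|_{L^2}$. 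The time-boundary piece $[\int(P-\tilde P)\divv u]_0^t$ is absorbed via Young's inequality and \eqref{L^2 estimate blow up}. For $I_F$, Cauchy--Schwarz directly gives $|I_F|\le\tfrac18\int\rho|u_t|^2+C\int\rho^{-1}(\rho-\trho)^2|\nabla\psi|^2\le\tfrac18\int\rho|u_t|^2+\tilde C$ by \eqref{blow-up 2 not}, \eqref{lower bound on rho}, and $\psi\in H^2$.

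The decisive contribution is $I_A$: Cauchy--Schwarz gives
\begin{equation*}
|I_A|\le\tfrac18\int_{\R^3}\rho|u_t|^2 dx+C\int_{\R^3}\rho|u|^2|\nabla u|^2 dx,
\end{equation*}
and the key input \eqref{L4 estimate on u}, combined with the density bound \eqref{blow-up 2 not}, yields
\begin{equation*}
\int_{\R^3}\rho|u|^2|\nabla u|^2 dx\le\Bigl(\int_{\R^3}\rho^2|u|^4 dx\Bigr)^{1/2}\|\nabla u\|_{L^4}^2\le\tilde C\|\nabla u\|_{L^4}^2.
\end{equation*}
Absorbing the $\int\rho|u_t|^2$ contributions from $I_P$, $I_A$, $I_F$, integrating in $t$, and invoking \eqref{L^2 estimate blow up} for the initial data gives
\begin{equation*}
\sup_{0\le\tau\le t}\|\nabla u(\cdot,\tau)\|_{L^2}^2+\int_0^t\!\!\int_{\R^3}\rho|u_t|^2 dx d\tau\le\tilde C+\tilde C\int_0^t\|\nabla u\|_{L^4}^2 d\tau\le\tilde C\bigl(1+\Phi_3(t)\bigr),
\end{equation*}
using $\|\nabla u\|_{L^4}^2\le 1+\|\nabla u\|_{L^4}^4$. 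Finally, the pointwise inequality $|\dot u|^2\le 2|u_t|^2+2|u|^2|\nabla u|^2$ converts $\rho^{1/2}u_t$ into $\rho^{1/2}\dot u$ at the cost of another already-controlled copy of $\int\rho|u|^2|\nabla u|^2$, yielding the claimed bound $\Phi_1(t)\le\tilde C(1+\Phi_3(t))$.

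The main obstacle I anticipate is the pressure term $I_P$: a naive H\"older applied to $\int Pu\cdot\nabla\divv u$ loses control through $\nabla^2 u$, which is not among the a priori bounded quantities at this stage. The remedy sketched above, namely passing through the effective viscous flux $F$ to trade $\nabla\divv u$ against $\nabla F$ and $\nabla(P-\tilde P)$, is precisely the place where the isothermal structure $\gamma=1$ is essential, consistent with the role of $\gamma=1$ emphasized in the introduction.
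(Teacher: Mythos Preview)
Your argument is correct and reaches the stated bound, but it takes a noticeably different and more laborious route than the paper's. The paper obtains, by the Hoff-type energy identity (testing the momentum equation against $\dot u$ rather than $u_t$), the inequality
\[
\sup_{0\le\tau\le t}\|\nabla u\|_{L^2}^2+\int_0^t\!\!\int_{\R^3}\rho|\dot u|^2\,dx\,d\tau\le \tilde C+\tilde C\int_0^t\!\!\int_{\R^3}|\nabla u|^3\,dx\,d\tau,
\]
where the cubic right-hand side arises from the commutators $\int\Delta u\cdot(u\cdot\nabla u)$ and $\int\nabla\divv u\cdot(u\cdot\nabla u)$ in the viscous terms, while the pressure contribution collapses cleanly to $\tfrac{d}{dt}\int P\,\divv u+O(\|\nabla u\|_{L^2}^2)$ with no second derivatives of $u$ appearing. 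A single H\"older step, $\int_0^t\!\int|\nabla u|^3\le(\int_0^t\!\int|\nabla u|^2)^{1/2}(\int_0^t\!\int|\nabla u|^4)^{1/2}\le \tilde C\Phi_3^{1/2}$, then finishes the proof in two lines.

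By contrast, you test against $u_t$; this forces you to confront $\int Pu\cdot\nabla\divv u$ in the pressure remainder, which you resolve through the effective viscous flux decomposition $(\mu+\lambda)\nabla\divv u=\nabla(\trho F)+\nabla(P-\tilde P)$ and estimate \eqref{bound on F and omega in terms of u}. That works, but it is extra machinery the paper avoids entirely. Your treatment of $I_A$ via \eqref{L4 estimate on u} is a legitimate alternative to the cubic term, and your approach also leans on the lower bound \eqref{lower bound on rho} (for $I_F$ and for $\|u\|_{L^2}$), which the paper's route does not need here. Finally, your closing remark that $\gamma=1$ is ``essential'' for $I_P$ overstates the case: for general $\gamma$ one picks up only an additional $(\gamma-1)\int P(\divv u)^2$, which is harmless. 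The genuinely isothermal step in this section is Lemma~\ref{estimate on u_s lemma}, not the present lemma.
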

\begin{proof}
Following the proof of Lemma~\ref{estimate on nabla u lem}, we have
\begin{align}\label{H1 estimate blow up}
\sup_{0\le \tau\le t}\intox|\nabla u|^2dxd\tau+\intoxt\rho|\dot{u}|^2dxd\tau\le \tilde C+\tilde C\intoxt|\nabla u|^3dxd\tau.
\end{align}
The second term on the right side of \eqref{H1 estimate blow up} can be bounded by
\[\begin{aligned}
\intoxt|\nabla u|^3dxd\tau&\le \Big(\intoxt|\nabla u|^2dxd\tau\Big)^\frac{1}{2}\Big(\intoxt|\nabla u|^4dxd\tau\Big)^\frac{1}{2}\\
&\le \tilde C\Phi_3^\frac{1}{2}.
\end{aligned}\]
Applying the above bounds on \eqref{H1 estimate blow up}, the result follows.
\end{proof}
Before we estimate $\Phi_2$, we introduce the following decomposition on $u$ stated in section~\ref{intro section}. We write
\begin{equation}\label{decomposition on u}
u=u_p+u_s,
\end{equation}
where $u_p$ and $u_s$ satisfy \eqref{def of u_p and u_s} and we recall that $\tilde P:=P(\trho)$. Then by using \eqref{estimates on Lame operator}, for all $r>1$, the term $u_p$ can be bounded by
\begin{equation}\label{estimate on u_p}
\intox|\nabla u_p|^rdx\le \tilde C\intox|P-\tilde P|^rdx\le \tilde C\intox|\rho-\tilde\rho|^rdx.
\end{equation}
On the other hand, the term $u_s$ can be estimated as follows.
\begin{lem}\label{estimate on u_s lemma}
For any $0\le t<T^*$, we have
\begin{equation}\label{estimate on u_s}
\sup_{0\le \tau\le t}\intox|\nabla u_s|^2dxd\tau+\intoxt\rho|\dt(u_s)|^2dxd\tau+\intoxt|\Delta u_s|^2dxd\tau\le \tilde C.
\end{equation}
\end{lem}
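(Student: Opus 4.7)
The plan is a standard energy estimate for the parabolic equation \eqref{def of u_p and u_s}$_2$ satisfied by $u_s$, testing against $\dt u_s$. After multiplying and integrating over $\R^3$, one obtains
\[
\intox \rho|\dt u_s|^2\,dx + \frac{1}{2}\frac{d}{dt}\intox(\mu|\nabla u_s|^2 + \lambda(\divv u_s)^2)\,dx = I_1 + I_2 + I_3,
\]
where $I_1 = -\intox \rho u\cdot\nabla u\cdot\dt u_s\,dx$, $I_2 = -\intox \rho\dt u_p\cdot\dt u_s\,dx$, and $I_3 = \intox \trho^{-1}(\rho-\trho)\nabla\tilde P\cdot\dt u_s\,dx$. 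The strategy is to absorb each $I_j$ into $\tfrac14\|\rho^{1/2}\dt u_s\|_{L^2}^2$ plus controllable remainders, and then close via Gr\"onwall.

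The cleanest pieces are $I_2$ and $I_3$, and both rely crucially on features of the present setting. For $I_3$, using the lower bound \eqref{lower bound on rho}, the boundedness of $\trho^{\pm 1}$ and $\nabla\tilde P$ coming from Lemma~\ref{existence for steady state lem}, and the $L^2$ control on $\rho-\trho$ encoded in \eqref{L^2 estimate blow up}, Cauchy--Schwarz yields $|I_3|\le \tfrac18\|\rho^{1/2}\dt u_s\|_{L^2}^2 + \tilde C$. For $I_2$ I would invoke the isothermal hypothesis $\gamma=1$: differentiating \eqref{def of u_p and u_s}$_1$ in $t$ and using $\rho_t=-\divv(\rho u)$ together with $P=a\rho$ gives
\[
\mu\Delta\dt u_p + \lambda\nabla\divv(\dt u_p) = \nabla P_t = -\nabla\divv(Pu),
\]
so Lemma~\ref{estimates on Lame operator}(iii) applied with $\varphi=-Pu$ yields $\|\dt u_p\|_{L^2}\le C\|Pu\|_{L^2}\le \tilde C\|\rho^{1/2}u\|_{L^2}\le \tilde C$, and therefore $|I_2|\le \tfrac18\|\rho^{1/2}\dt u_s\|_{L^2}^2 + \tilde C$.

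The main obstacle is $I_1$, because no a priori bound on $\|\nabla u\|_{L^\infty}$ or $\|\nabla u\|_{H^1}$ is yet available (cf.\ Remark~\ref{rem:L4 bound on u by F}). I would split $\nabla u=\nabla u_p+\nabla u_s$: the $\nabla u_p$-contribution is harmless via \eqref{estimate on u_p} at $r=\infty$ together with the uniform bound on $\rho-\trho$, which delivers $\|\nabla u_p\|_{L^\infty}\le \tilde C$. For the $\nabla u_s$-contribution, I use \eqref{L4 estimate on u} combined with \eqref{lower bound on rho} to extract $\|u\|_{L^4}\le \tilde C$, and then the Gagliardo--Nirenberg interpolation $\|\nabla u_s\|_{L^4}^2\le C\|\nabla u_s\|_{L^2}^{1/2}\|\Delta u_s\|_{L^2}^{3/2}$ followed by Young's inequality to obtain
\[
|I_1|\le \tfrac18\|\rho^{1/2}\dt u_s\|_{L^2}^2 + \tilde C + \tilde C\|\nabla u_s\|_{L^2}^2 + \varepsilon\|\Delta u_s\|_{L^2}^2.
\]

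To dispose of the stray $\|\Delta u_s\|_{L^2}^2$, apply Lemma~\ref{estimates on Lame operator}(i) to the elliptic form of \eqref{def of u_p and u_s}$_2$; feeding the same interpolation bound on $\|\nabla u\|_{L^4}$ back in and absorbing gives $\|\Delta u_s\|_{L^2}^2\le \tilde C(1+\|\rho^{1/2}\dt u_s\|_{L^2}^2+\|\nabla u_s\|_{L^2}^2+\|\dt u_p\|_{L^2}^2)$. Substituting this into the energy identity, choosing $\varepsilon$ small enough to absorb both $\|\Delta u_s\|_{L^2}^2$ and $\|\rho^{1/2}\dt u_s\|_{L^2}^2$ onto the left-hand side, and then applying Gr\"onwall's inequality on $\|\nabla u_s\|_{L^2}^2$ delivers the first two pieces of \eqref{estimate on u_s}. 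The final piece, the bound on $\int_0^t\|\Delta u_s\|_{L^2}^2\,d\tau$, follows by integrating the $\|\Delta u_s\|_{L^2}^2$ inequality just displayed. The real technical difficulty is the circular appearance of $\|\Delta u_s\|_{L^2}$ on both sides, which forces a careful ordering of absorption steps, but nothing beyond standard tricks is needed once the isothermal-specific identity unlocks the estimate on $\dt u_p$.
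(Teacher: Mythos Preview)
Your overall architecture matches the paper's proof almost exactly: test \eqref{def of u_p and u_s}$_2$ against $\dt u_s$, handle $I_2$ via the isothermal identity $P_t=-\divv(Pu)$ and Lemma~\ref{estimates on Lame operator}(iii), handle $I_3$ by Cauchy--Schwarz and the $L^2$ bound on $\rho-\trho$, couple the energy inequality with the elliptic estimate $\|\Delta u_s\|_{L^2}^2\le \tilde C(\|\rho^{1/2}\dt u_s\|_{L^2}^2+\cdots)$, absorb, and close by Gr\"onwall. That is precisely what the paper does.

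There is, however, one genuine slip in your treatment of $I_1$. You write that ``the $\nabla u_p$-contribution is harmless via \eqref{estimate on u_p} at $r=\infty$ \ldots\ which delivers $\|\nabla u_p\|_{L^\infty}\le \tilde C$.'' This is not available: Lemma~\ref{estimates on Lame operator} (and hence \eqref{estimate on u_p}) is stated only for $r\in(1,\infty)$, and for good reason---the underlying Calder\'on--Zygmund operator that produces $\nabla u_p$ from $P-\tilde P$ is \emph{not} bounded on $L^\infty$. So you cannot conclude $\|\nabla u_p\|_{L^\infty}\le C\|P-\tilde P\|_{L^\infty}$. Nor can you recover an $L^\infty$ bound on $\nabla u_p$ by Sobolev embedding, since that would require control of $\nabla^2 u_p$ in some $L^q$ with $q>3$, which in turn needs $\|\nabla\rho\|_{L^q}$, a quantity not yet estimated at this stage.

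The fix is immediate and is exactly what the paper does: stay at $r=4$. One has $\|\nabla u_p\|_{L^4}^4\le \tilde C\|\rho-\trho\|_{L^4}^4\le \tilde C$ by interpolating the $L^2$ energy bound \eqref{L^2 estimate blow up} against the $L^\infty$ bound \eqref{blow-up 2 not}. Then the full convection term is estimated via
\[
\intox \rho|u|^2|\nabla u|^2\,dx \le \Big(\intox \rho|u|^4\,dx\Big)^{1/2}\Big(\intox |\nabla u|^4\,dx\Big)^{1/2},
\]
with $\|\nabla u\|_{L^4}^4\le C(\|\nabla u_s\|_{L^4}^4+\|\nabla u_p\|_{L^4}^4)$ and the same Gagliardo--Nirenberg interpolation you already invoke for $\nabla u_s$. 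After this correction your argument and the paper's coincide.
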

\begin{proof}
We multiply \eqref{def of u_p and u_s}$_2$ by $\dt(u_s)$ and integrate to obtain
\begin{align}\label{estimate on u_s step 1}
&\intox\mu|\nabla u_s|^2dx\Big|_0^t+\intoxt(\mu+\lambda)|\divv u_s|^2dxd\tau+\intoxt\rho|\dt(u_s)|^2dxd\tau\\
&=-\intoxt\Big(\rho u\cdot\nabla u\Big)\cdot\dt(u_s)dxd\tau-\intoxt\Big(\rho\dt(u_p)\Big)\cdot\dt(u_s)dxd\tau\notag\\
&\qquad+\intoxt\trho^{-1}(\rho-\trho)\nabla\tilde P\cdot\dt(u_s)dxd\tau.\notag
\end{align}
We estimate the right side of \eqref{estimate on u_s step 1} term by term. Using \eqref{L4 estimate on u} and \eqref{estimate on u_p}, the first integral can be bounded by
\begin{align*}
&\Big(\intoxt\rho|u|^2|\nabla u|^2dxd\tau\Big)^\frac{1}{2}\Big(\intoxt\rho|\dt(u_s)|^2dxd\tau\Big)^\frac{1}{2}\\
&\le \tilde C\Big[\int_0^t\Big(\intox\rho|u|^4dx\Big)^\frac{1}{2}\Big(\intox|\nabla u_s|^4dx+\intox|\nabla u_p|^4dx\Big)^\frac{1}{2}d\tau\Big]^\frac{1}{2}\\
&\qquad\qquad\times\Big(\intoxt\rho|\dt(u_s)|^2dxd\tau\Big)^\frac{1}{2}\\
&\le \tilde C\Big[\int_0^t\Big(\intox|\nabla u_s|^2dx\Big)^\frac{1}{4}\Big(\intox|\Delta u_s|^2dx\Big)^\frac{3}{4}d\tau+\int_0^t\Big(\intox|\rho-\tilde\rho|^4dx\Big)^\frac{1}{2}d\tau\Big]^\frac{1}{2}\\
&\qquad\qquad\times\Big(\intoxt\rho|\dt(u_s)|^2dxd\tau\Big)^\frac{1}{2}\\
&\le \tilde C\Big(\intoxt\rho|\dt(u_s)|^2dxd\tau\Big)^\frac{1}{2}\\
&\qquad\qquad\times\Big[\Big(\intoxt|\nabla u_s|^2dxd\tau\Big)^\frac{1}{8}\Big(\intoxt|\Delta u_s|^2dxd\tau\Big)^\frac{3}{8}+1\Big].
\end{align*}
Next to estimate $\dis-\intoxt\Big(\rho\dt(u_p)\Big)\cdot\dt(u_s)dxd\tau$, we differentiate \eqref{def of u_p and u_s}$_1$ with respect to $t$ and use the assumption that $P(\rho)=a\rho$ to obtain
\begin{equation*}
\mu\Delta\dt(u_p)+(\mu+\lambda)\nabla\divv\dt(u_p)=\nabla\divv(-P\cdot u).
\end{equation*}
Using Lemma~\ref{estimates on Lame operator} and the $L^2$-estimate \eqref{L^2 estimate blow up} on $u$, we have
\begin{align}\label{estimate on dt u_p}
\intoxt|\dt(u_p)|^2dxd\tau\le \tilde C\intoxt|P\cdot u|^2dxd\tau\le \tilde C.
\end{align}
Therefore
\begin{align*}
&-\intoxt\Big(\rho\dt(u_p)\Big)\cdot\dt(u_s)dxd\tau\\
&\le\Big(\intoxt\rho|\dt(u_s)|^2dxd\tau\Big)^\frac{1}{2}\Big(\intoxt|\dt(u_p)|^2dxd\tau\Big)^\frac{1}{2}\\
&\le \tilde C\Big(\intoxt\rho|\dt(u_s)|^2dxd\tau\Big)^\frac{1}{2}.
\end{align*}
To estimate $\dis\intoxt\trho^{-1}(\rho-\trho)\nabla\tilde P\cdot\dt(u_s)dxd\tau$, using \eqref{L^2 estimate blow up} and \eqref{lower bound on rho}, we readily have
\begin{align*}
&\intoxt\trho^{-1}(\rho-\trho)\nabla\tilde P\cdot\dt(u_s)dxd\tau\\
&\le \tilde C\Big(\intoxt|\rho-\trho|^2dxd\tau\Big)^\frac{1}{2}\Big(\intoxt\rho|\dt(u_s)|^2dxd\tau\Big)^\frac{1}{2}\\
&\le \tilde C\Big(\intoxt\rho|\dt(u_s)|^2dxd\tau\Big)^\frac{1}{2}.
\end{align*}
Combining the above, we have from \eqref{estimate on u_s step 1} that 
\begin{align}\label{estimate on u_s step 2}
&\intox|\nabla u_s|^2(x,t)dx+\intoxt|\divv(u_s)|^2dxd\tau+\intoxt\rho|\dt(u_s)|^2dxd\tau\notag\\
&\le \tilde C\Big(\intoxt|\nabla u_s|^2dxd\tau\Big)^\frac{1}{4}\Big(\intoxt|\Delta u_s|^2dxd\tau\Big)^\frac{3}{4}+\tilde C.
\end{align}
It remains to estimate the term $\dis\intoxt|\Delta u_s|^2$. Rearranging the terms in \eqref{def of u_p and u_s}$_2$, we have that
\begin{equation*}
\mu\Delta u_s+(\mu+\lambda)\nabla\divv(u_s)=\rho\dt(u_s)+\rho u\cdot\nabla u+\rho\dt(u_p)-\trho^{-1}(\rho-\trho)\nabla\tilde P.
\end{equation*}
Therefore, we can apply Lemma~\ref{estimates on Lame operator} and the bound \eqref{L^2 estimate blow up} to get
\begin{align}\label{estimate on Delta u_s}
&\intoxt|\Delta u_s|^2dxd\tau\\
&\le \tilde C\Big[\intoxt(|\rho\dt(u_s)|^2+|\rho u\cdot\nabla u|^2+|\rho\dt(u_p)|^2+|\rho-\trho|^2)dxd\tau\Big]\notag\\
&\le \tilde C\Big(\intoxt\rho|\dt(u_s)|^2dxd\tau+1\Big).\notag
\end{align}
Applying the estimate \eqref{estimate on Delta u_s} on \eqref{estimate on u_s step 2} and using Gr\"{o}wall's inequality, we conclude that for $0\le t< T^*$,
\begin{equation*}
\intox|\nabla u_s|^2(x,t)dx\le \tilde C,
\end{equation*}
and the result \eqref{estimate on u_s} follows.
\end{proof}
We now give the estimate on $\Phi_2$ as defined in \eqref{def of Phi 2}, which is given in the following lemma:
\begin{lem}\label{bound on Phi 2 lemma}
Assume that \eqref{blow-up 2 not} holds. For any $0\le t< T^*$, 
\begin{equation}\label{bound on Phi 2}
\Phi_2(t)\le \tilde C[\Phi_1(t)+\Phi_3(t)+1].
\end{equation}
\end{lem}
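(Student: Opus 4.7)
The plan is to derive a higher-order energy identity for $\rho|\dot u|^2$ by applying a flow-aligned time derivative to the momentum equation. Concretely, using \eqref{eqn for steady state} and $f=\nabla\psi$ we first rewrite \eqref{NS}$_2$ as
\[
\rho\dot u^j=\mu\Delta u^j+\lambda(\divv u)_{x_j}-(P-\tilde P)_{x_j}+(\rho-\trho)\psi_{x_j},
\]
so that the force and the steady-state pressure appear combined as $(\rho-\trho)\psi_{x_j}$. Next I would apply the operator $\partial_t+\divv(\cdot\, u)$ to both sides. Thanks to the continuity equation \eqref{NS}$_1$, the left-hand side collapses to $\rho\ddot u^j$. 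Multiplying by $\dot u^j$ and integrating over $\R^3$ yields the basic identity
\[
\tfrac12\tfrac{d}{dt}\!\int\!\rho|\dot u|^2\,dx+\int\!\bigl(\mu|\nabla\dot u|^2+(\mu+\lambda)(\divv\dot u)^2\bigr)dx=\sum_{k}J_k,
\]
where the $J_k$ are the commutators produced when $\partial_t+\divv(\cdot\, u)$ is swapped past the spatial derivatives on the right.

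The $J_k$ are then estimated in three groups. The viscous commutators are trilinear in $\nabla u$, $\nabla u$ and $\nabla\dot u$, so Cauchy–Schwarz and Young's inequality bound them by $\tfrac{\mu}{4}\|\nabla\dot u\|_{L^2}^2+C\|\nabla u\|_{L^4}^4$, which is exactly the quantity collected in $\Phi_3$. The pressure commutator, using $P=a\rho$ and the continuity equation to write $\dot P=-P\divv u$, reduces to a bilinear form in $\nabla u$ and $P$ and is controlled by $C(\|\nabla u\|_{L^2}^2+1)$ via the uniform density bound \eqref{blow-up 2 not}. The term arising from $(\rho-\trho)\psi_{x_j}$ is the one that is not present in \cite{HLX11}; its material derivative produces $\dot\rho\,\psi_{x_j}+(\rho-\trho)u\cdot\nabla\psi_{x_j}$, and using $\dot\rho=-\rho\divv u$, the $H^2\hookrightarrow L^\infty$ embedding applied to $\psi$, and the basic energy inequality \eqref{L^2 estimate blow up}, this contribution is controlled by $C(\|\nabla u\|_{L^2}^2+\|\rho-\trho\|_{L^2}^2+1)$.

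Putting the three estimates together gives
\[
\tfrac{d}{dt}\!\int\!\rho|\dot u|^2dx+\tfrac{\mu}{2}\!\int\!|\nabla\dot u|^2dx\le C\bigl(1+\|\nabla u\|_{L^4}^4+\|\nabla u\|_{L^2}^2+\textstyle\int\!\rho|\dot u|^2dx\bigr).
\]
Integrating in $\tau$ from $0$ to $t$, applying Gr\"onwall's inequality, and controlling the initial value $\|\rho^{1/2}\dot u(\cdot,0)\|_{L^2}^2\le\|g\|_{L^2}^2$ via the compatibility condition \eqref{compatibility condition}, I then recognize $\int_0^t\|\nabla u\|_{L^4}^4d\tau=\Phi_3(t)$ and $\sup_{\tau\le t}\|\nabla u\|_{L^2}^2\le\Phi_1(t)$ (while $\int_0^t\|\nabla u\|_{L^2}^2d\tau\le T^*\Phi_1(t)$ is absorbed into $\tilde C$). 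This yields the desired bound $\Phi_2(t)\le\tilde C[\Phi_1(t)+\Phi_3(t)+1]$. The main obstacle is the careful bookkeeping of the force-induced commutators; the decisive structural fact is that the combination $\rho f-\nabla\tilde P=(\rho-\trho)\nabla\psi$ is no worse than $\rho-\trho$, so the extra terms created by the large external potential force never exceed the regularity already supplied by the basic $L^2$-energy balance \eqref{L^2 estimate blow up}.
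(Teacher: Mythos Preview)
Your proposal is correct and follows essentially the same route as the paper: both apply the Hoff-type operator $\partial_t+\divv(\cdot\,u)$ to the momentum equation, test with $\dot u$, and sort the resulting commutators into viscous, pressure, and force contributions; the paper simply cites \cite{hoff95} for the first two and absorbs the force term into the constant, while you carry the computation out explicitly. Two minor remarks: the Gr\"onwall step is unnecessary, since the term $\int_0^t\!\int\rho|\dot u|^2\,dx\,d\tau$ you place on the right is already $\le\Phi_1(t)$ by definition; and when you compute the action of $\partial_t+\divv(\cdot\,u)$ on $(\rho-\trho)\psi_{x_j}$ you should get $-\divv(\trho u)\psi_{x_j}+(\rho-\trho)u\cdot\nabla\psi_{x_j}$ rather than $\dot\rho\,\psi_{x_j}+\ldots$, but the extra piece $\nabla\trho\cdot u\,\psi_{x_j}$ is handled by the same estimate using \eqref{lower bound on rho} and \eqref{L^2 estimate blow up}.
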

\begin{proof}
Following the steps given in \cite[pp.228-230]{hoff95} (by taking $\sigma\equiv1$), we have
\begin{align}\label{bound on Phi 2 step 1}
&\intox|\dot{u}(x,t)|^2dx+\intoxt|\nabla\dot{u}|^2dxd\tau\notag\\
&\le \tilde C\Big |\sum_{1\le k_i,j_m\le 3}\int_{0}^{t}\int_{\R^3}u^{j_1}_{x_{k_1}}u^{ j_2}_{x_{k_2}}u^{j_3}_{x_{k_3}}dxd\tau\Big|\\
&\qquad\qquad+\tilde C\Big(\intoxt|\nabla u|^4dxd\tau+\Phi_1(t)+1\Big).\notag
\end{align}
The summation term in \eqref{bound on Phi 2 step 1} can be bounded by $\dis\tilde C\intoxt|\nabla u|^3$, and hence it can be bounded by $\tilde C\Phi_3^\frac{1}{2}$. The estimate \eqref{bound on Phi 2} then follows by Cauchy's inequality.
\end{proof}
Finally, we make use of $u_s$ and $u_p$ in \eqref{def of u_p and u_s} to estimate $\Phi_3$:
\begin{lem}\label{bound on Phi 3 lemma}
For any $0\le t< T^*$,
\begin{equation}\label{bound on Phi 3}
\Phi_3(t)\le \tilde C[\Phi_1(t)^\frac{1}{2}+1].
\end{equation}
\end{lem}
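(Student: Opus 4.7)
The plan is to split $u = u_p + u_s$ using the decomposition \eqref{def of u_p and u_s} and bound the two contributions to $\Phi_3$ separately. For the elliptic part, the Lamé estimate \eqref{estimate on u_p} gives $\|\nabla u_p\|_{L^4}^4 \le \tilde C\|\rho-\trho\|_{L^4}^4$, and interpolating $\|\rho-\trho\|_{L^4}^4 \le \|\rho-\trho\|_{L^\infty}^2\|\rho-\trho\|_{L^2}^2$ between \eqref{blow-up 2 not} and the $L^2$-bound implicit in \eqref{L^2 estimate blow up} (valid in the isothermal case since $G(\rho)$ is comparable to $(\rho-\trho)^2$ on the bounded, bounded-away-from-zero range of $\rho$) yields $\int_0^t\|\nabla u_p\|_{L^4}^4 d\tau \le \tilde C$.

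For the parabolic part, I would first apply the Gagliardo-Nirenberg inequality \eqref{GN1} to $\nabla u_s$, giving $\|\nabla u_s\|_{L^4}^4\le C\|\nabla u_s\|_{L^2}\|\nabla^2 u_s\|_{L^2}^3$, and then use $\sup_\tau\|\nabla u_s\|_{L^2}\le\tilde C$ from Lemma~\ref{estimate on u_s lemma} to reduce the task to bounding $\int_0^t\|\nabla^2 u_s\|_{L^2}^3 d\tau$. A Cauchy-Schwarz in time together with $\int_0^t\|\Delta u_s\|_{L^2}^2 d\tau\le\tilde C$ further bounds this by $\tilde C(\int_0^t\|\nabla^2 u_s\|_{L^2}^4 d\tau)^{1/2}$. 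For the fourth-power integral I apply Lemma~\ref{estimates on Lame operator} to the rearranged equation $\mu\Delta u_s+\lambda\nabla\divv u_s = \rho(u_s)_t+\rho u\cdot\nabla u+\rho(u_p)_t-\trho^{-1}(\rho-\trho)\nabla\tilde P$; using $\|\rho u\cdot\nabla u\|_{L^2}\le\tilde C\|\nabla u\|_{L^4}$ (which follows by Cauchy-Schwarz from \eqref{L4 estimate on u}) and $\|(u_p)_t\|_{L^2}\le\tilde C$ (a consequence of differentiating \eqref{def of u_p and u_s}$_1$ in $t$, using $P=a\rho$, and invoking the third part of Lemma~\ref{estimates on Lame operator} as in \eqref{estimate on dt u_p}), this yields $\int_0^t\|\nabla^2 u_s\|_{L^2}^4 d\tau\le\tilde C\int_0^t\|\rho^{1/2}(u_s)_t\|^4 d\tau + \tilde C\Phi_3 + \tilde C$, and a further $\sup\cdot\int$ splitting bounds the first integral by $\tilde C\sup_\tau\|\rho^{1/2}(u_s)_t\|^2$.

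The hardest step is then controlling $\sup_\tau\|\rho^{1/2}(u_s)_t\|^2$, which after writing $(u_s)_t = \dot u - u\cdot\nabla u - (u_p)_t$ reduces to $\tilde C(\Phi_2+\sup_\tau\|\nabla u\|_{L^4}^2+1)$; the remaining pointwise-in-time bound $\sup_\tau\|\nabla u\|_{L^4}^2\le\tilde C(\Phi_1+\Phi_2+1)$ would follow from \eqref{bound on u in terms of F and omega} combined with the Gagliardo-Nirenberg inequality $\|F\|_{L^4}^2\le C\|F\|_{L^2}^{1/2}\|\nabla F\|_{L^2}^{3/2}$ (and the analogue for $\omega$), the estimates $\|F\|_{L^2}\le C(\|\nabla u\|_{L^2}+1)$ and $\|\nabla F\|_{L^2}\le C(\|\rho^{1/2}\dot u\|_{L^2}+1)$ from \eqref{bound on F and omega in terms of u}, and Young's inequality applied to $\Phi_1^{1/4}\Phi_2^{3/4}$. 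Concatenating all the estimates yields $\Phi_3\le\tilde C(1+\Phi_1^{1/2}+\Phi_2^{1/2}+\Phi_3^{1/2})$, and invoking Lemma~\ref{bound on Phi 2 lemma} to replace $\Phi_2^{1/2}$ by $\tilde C(\Phi_1^{1/2}+\Phi_3^{1/2}+1)$ and then using Young's inequality once more to absorb the resulting $\Phi_3^{1/2}$ into the left-hand side delivers the desired bound $\Phi_3\le\tilde C(\Phi_1^{1/2}+1)$.
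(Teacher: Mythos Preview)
Your argument is correct, but you have taken a considerably longer route than the paper. Both proofs start the same way: split $u=u_p+u_s$, bound $\int_0^t\|\nabla u_p\|_{L^4}^4$ by \eqref{estimate on u_p}, and use Gagliardo--Nirenberg together with Lemma~\ref{estimate on u_s lemma} to reduce the $u_s$ contribution to a pointwise bound on $\|\Delta u_s(\cdot,\tau)\|_{L^2}^2$. The difference is in how that pointwise bound is obtained. You keep the right-hand side of the Lam\'e equation for $u_s$ in the form $\rho(u_s)_t+\rho u\cdot\nabla u+\rho(u_p)_t-\trho^{-1}(\rho-\trho)\nabla\tilde P$ and then spend most of the effort estimating each piece, which forces you through the detour $\sup_\tau\|\rho^{1/2}(u_s)_t\|_{L^2}^2$, then $\sup_\tau\|\nabla u\|_{L^4}^2$, then the $F,\omega$ machinery, and finally an invocation of Lemma~\ref{bound on Phi 2 lemma} with an absorption of $\Phi_3^{1/2}$. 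The paper instead notices the algebraic collapse
\[
\rho(u_s)_t+\rho(u_p)_t+\rho\, u\cdot\nabla u=\rho u_t+\rho\, u\cdot\nabla u=\rho\dot u,
\]
so that the Lam\'e equation reads simply $\mu\Delta u_s+\lambda\nabla\divv(u_s)=\rho\dot u-\trho^{-1}(\rho-\trho)\nabla\tilde P$. This gives $\|\Delta u_s\|_{L^2}^2\le\tilde C(\|\rho^{1/2}\dot u\|_{L^2}^2+1)\le\tilde C(\Phi_2+1)$ in one line, and hence $\Phi_3\le\tilde C(\Phi_2^{1/2}+1)$; the stated bound with $\Phi_1^{1/2}$ then follows (as in your final step) from Lemma~\ref{bound on Phi 2 lemma} and absorbing $\Phi_3^{1/2}$. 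The paper's observation eliminates the entire middle portion of your argument; you may want to note this shortcut.
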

\begin{proof}
Using the decomposition \eqref{decomposition on u} on $u$ and the estimates \eqref{estimate on u_p} and \eqref{estimate on u_s}, we have
\begin{align*}
\Phi_3&\le \intoxt|\nabla u_s|^4dxd\tau+\intoxt|\nabla u_p|^4dxd\tau\\
&\le \tilde C\int_0^t\Big(\intox|\nabla u_s|^2dx\Big)^\frac{1}{2}\Big(\intox|\Delta u_s|^2dx\Big)^\frac{3}{2}d\tau+\intoxt|\rho-\tilde\rho|^4dxd\tau\\
&\le \tilde C\Big[\Big(\sup_{0\le \tau\le t}\intox|\Delta u_s(x,\tau)|^2dx\Big)^\frac{1}{2}+1\Big].
\end{align*}
To estimate $\dis\intox|\Delta u_s|^2$, we rearrange the terms in \eqref{def of u_p and u_s}$_2$ to obtain
\[\mu\Delta u_s+(\mu+\lambda)\nabla\divv(u_s)=\rho\dot{u}-\rho\nabla\phi.\]
Therefore Lemma~\ref{estimates on Lame operator} implies that
\begin{align*}
\intox|\Delta u_s|^2dx\le \tilde C\Big[\intox(|\rho\dot{u}|^2+|\rho\nabla\phi|^2)dx\Big]\le \tilde C(\Phi_2+1),
\end{align*}
and the result follows.
\end{proof}
\begin{proof}[Proof of Theorem~\ref{Main thm for gamma=1}]
In view of the bounds \eqref{bound on Phi 1}, \eqref{bound on Phi 2} and \eqref{bound on Phi 3}, one can conclude that for $0\le t< T^*$,
\begin{equation}\label{bound on Phi 1 Phi 2 Phi 3}
\Phi_1(t)+\Phi_2(t)+\Phi_3(t)\le \tilde C.
\end{equation}
Hence using the bound \eqref{bound on Phi 1 Phi 2 Phi 3} and applying the same argument given in the proof of Lemma~\ref{higher estimate on rho and u lem}, for $T\in[0,T^*)$ and $q\in(3,6]$, we also have
\begin{align*}\label{higher estimate on rho and u}
\sup_{0\le t\le T}(\|\rho\|_{H^1\cap W^{1,q}}+\|\nabla u\|_{H^1})\le \tilde C.
\end{align*}
Therefore, similar to the proof of Theorem~\ref{Main thm for gamma>1}, we can extend the strong solution $(\rho,u)$ beyond $t=T^*$, which leads to a contradiction. This completes the proof of Theorem~\ref{Main thm for gamma=1}.
\end{proof}


\subsection*{Acknowledgment}
The author would like to thank the anonymous reviewers for their useful comments which have greatly improved the manuscript. The work described in this paper was partially supported from the Dean's Research Fund of the Faculty of Liberal Arts and Social Science, The Education University of Hong Kong, HKSAR, China (Project No. FLASS/DRF 04634). This work does not have any conflicts of interest.

\bibliographystyle{amsalpha}

\bibliography{References_for_blow_up_large_potential}

\end{document}